\newcommand{\COMMENT}[1]{}
\DeclarePairedDelimiter\abs{\lvert}{\rvert}
\let\oldabs\abs
\def\abs{\@ifstar{\oldabs}{\oldabs*}}
\newcommand{\eps}{\varepsilon}
\newcommand{\R}{\mathbb{R}}
\newcommand{\Div}{\textnormal{div}\,}
\newcommand{\loc}{\textnormal{loc}}
\newcommand{\set}[1]{\left\{#1\right\}}
\newcommand{\absgrad}[1]{\abs{\nabla{#1}}}
\newcommand{\Bn}{\mathbb{B}^n}
\newcommand{\absgradB}[1]{\abs{\nabla_{\mathbb{B}^n}{#1}}}
\newcommand{\textif}{\text{ if }}
\newcommand{\textas}{\text{ as }}
\newcommand{\texton}{\text{ on }}
\newcommand{\textin}{\text{ in }}
\newcommand{\textand}{\text{ and }}
\newcommand{\C}{\mathcal{C}}
\newcommand{\dv}{dv_{g_{\Bn}}}
\newcommand{\LB}{\ensuremath{L^{\mathbb{B}^n}_{\gamma}}}
\newcommand{\Bp}{{\beta_+(\gamma)}}
\newcommand{\Bm}{{\beta_-(\gamma)}}
\newcommand{\Ap}{{\alpha_+(\gamma)}}
\newcommand{\Am}{{\alpha_-(\gamma)}}
\newcommand{\crits}{{\crits}}
\newtheorem{lem}{Lemma}[section]
\newtheorem{lemma}{Lemma}[section]
\newtheorem{theorem}{Theorem}
\newtheorem{prop}{Proposition}[section]
\newcommand{\bremark}{\begin{remark} \em}
\newcommand{\eremark}{\end{remark} }
\theoremstyle{definition}
\newtheorem{remark}{Remark}[section]
\def \crits {2^*(s)}
\numberwithin{equation}{section}
\definecolor{g2}{rgb}{0,0.6,0}
\definecolor{r2}{rgb}{0.8,0,0}
\begin{document}

\title{Mass and Extremals Associated with the Hardy--Schr\"odinger Operator on Hyperbolic Space }

\date{\today}


\author[H. Chan, N. Ghoussoub, S. Mazumdar, S. Shakerian]{{Hardy Chan, } {Nassif Ghoussoub,}
{Saikat Mazumdar  }       \&      {  Shaya Shakerian}}
\address{Department of Mathematics, The University of British Columbia \footnote{This work was done while Hardy Chan was completing his PhD thesis, and Saikat Mazumdar and Shaya Shakerian were
holding postdoctoral positions at the University of British Columbia, under the supervision of Nassif Ghoussoub. All were
partially supported by a grant from the Natural Sciences and Engineering Research Council of Canada.
During his visit to UBC, Luiz Fernando de Oliveira Faria was partially supported by CAPES/Brazil (Proc. 6129/2015-03).}}

\author[L.F.O. Faria]{Luiz Fernando de Oliveira Faria}
\address{Departamento de Matem\'{a}tica, Universidade Federal de Juiz de Fora}

\vspace{1mm}

\begin{abstract}
We consider the Hardy--Schr\"odinger operator $\displaystyle L_\gamma:=-\Delta_{\Bn}-\gamma{V_2}$ on the Poincar\'e ball model of the Hyperbolic space $\Bn$ ($n \geq 3$). Here $V_2$ is a well chosen radially symmetric potential, which behaves like the Hardy potential around its singularity at $0$, i.e., $V_2(r)\sim \frac{1}{r^2}$.
Just like in the Euclidean setting, the operator $L_\gamma$  is positive definite whenever $\gamma <\frac{(n-2)^2}{4}$, in which case we exhibit explicit solutions for the Sobolev critical  equation $L_\gamma u =V_{\crits}u^{\crits-1}$ in $\Bn,$
where $0\leq s <2$, $\crits=\frac{2(n-s)}{n-2}$, and  $V_{\crits}$ is a weight that behaves like $\frac{1}{r^s}$ around $0$. In dimensions $ n\geq5$, the above equation in a domain $\Omega$ of $\Bn$ containing $0$ and away from the boundary, has a ground state solution, whenever $0<\gamma\leq\frac{n(n-4)}{4}$, and provided $L_\gamma$ is replaced by a linear perturbation $L_\gamma-\lambda u$, where $\displaystyle \lambda  > \frac{n-2}{n-4} \left(\frac{n(n-4)}{4}-\gamma \right)$. On the other hand,
 in dimensions $3$ and $4$, the existence of solutions depends on whether the domain has a postive ``hyperbolic mass,''  
a notion that we introduce and analyze therein.
\end{abstract}

\maketitle

\section{Introduction} Hardy--Schr\"odinger operators on manifolds are of the form $\Delta_g -V$, where $\Delta_g$ is the Laplace--Beltrami operator and $V$ is a potential that has a quadratic singularity at some point of the manifold. For hyperbolic spaces, Carron \cite{Carron} showed that, just like in the Euclidean case and with the same best constant, the following inequality holds on any Cartan--Hadamard manifold $M$,
\begin{equation*}
\frac{(n-2)^{2}}{4} \displaystyle\int _{M}  \frac{u^{2}}{d_{g}(o,x)^{2}}\,dv_{g} \leq \displaystyle\int _{M} |\nabla_{g} u|^{2} \,dv_{g}  \qquad \hbox{ for  all } u \in C^{\infty}_{c}(M),
\end{equation*}
where $d_{g}(o,x)$ denotes the geodesic distance  to a fixed point $o \in M$.
There are many other works identifying suitable Hardy potentials, their relationship with the elliptic operator on hand, as well as corresponding energy inequalities \cite{AK, dAD, DFP, KO, LiWang, OT, YSK}. In the Euclidean case, the Hardy potential $V(x)=\frac{1}{|x|^{2}}$ is distinguished by the fact that  $\frac{u^{2}}{|x|^{2}}$ has the same homogeneity as $|\nabla u|^{2}$, but also  $\frac{u^{\crits}}{|x|^{s}}$, where $\crits=\frac{2(n-s)}{n-2}$ and $0\leq s<2$. In other words, the integrals $\displaystyle\int _{\R^{n}} \frac{u^{2}}{|x|^{2}}\,dx$, $\displaystyle\int _{\R^{n}} |\nabla u|^{2}\,dx$ and $\displaystyle\int _{\R^{n}} \frac{u^{\crits}}{|x|^{s}} \,dx$ are  invariant under the scaling $u(x) \mapsto \lambda^{\frac{n-2}{2}}u(\lambda x)  $, $\lambda >0$, which makes corresponding minimization problem non-compact, hence giving rise to interesting concentration phenomena. In \cite{Adi}, Adimurthi and Sekar use the fundamental solution of a general second order elliptic operator to generate natural candidates and derive Hardy-type inequalities. They also extended their arguments to Riemannian manifolds using the fundamental solution of the $p$-Laplacian. In \cite{DFP}, Devyver, Fraas and Pinchover study the case of a general linear second order differential operator  $P$ on  non-compact manifolds. They find a  relation between positive super-solutions of the equation $Pu=0$, Hardy-type inequalities involving $P$ and a weight $W$, as well as some properties of the spectrum of a corresponding weighted operator.

 In this paper, we shall  focus on the Poincar\'e ball model of the hyperbolic space $\Bn$, $n \geq 3$, that is the Euclidean unit ball  $B_{1}(0):= \{ x \in \R^{n}: |x|<1\}$  endowed with the metric
$g_{\Bn}=  \left( \frac{2}{1-|x|^{2}} \right)^{2}g_{_{\hbox{Eucl}}}.$
This framework has the added feature of radial symmetry, which plays an important role and contributes to the richness of the structure. In this direction, Sandeep and Tintarev   \cite{Sandeep--Tintarev} recently came up with  several integral inequalities involving weights on  $\Bn$ that are invariant under scaling, once restricted to the class of radial functions (see also Li and Wang \cite{LiWang}). As described below, this scaling is given in terms of the fundamental solution of the hyperbolic Laplacian $\Delta_{\Bn}u=\Div_{\Bn}(\nabla_{\Bn}u)$. Indeed, let
\begin{align}\label{hypgreen}
f(r):=\dfrac{(1-r^2)^{n-2}}{r^{n-1}} \quad \hbox{ and }\quad{G}(r):=\displaystyle\int_{r}^{1}f(t)\,dt,
\end{align}
where $r=\sqrt{\sum _{i=1}^{n} x_{i}^{2}}$  denotes  the Euclidean distance of a point $x \in B_{1}(0)$ to the origin.  It is known that $\frac{1}{n \omega_{n-1}}G(r)$ is a fundamental solution of the hyperbolic Laplacian $\Delta_{\Bn}$.
\medskip
\noindent
As usual, the Sobolev space $H^{1}(\Bn)$ is defined as the completion of $C^{\infty}_{c}(\Bn)$ with respect to the norm
$ \| u\|^{2}_{H^{1}(\Bn)} = \displaystyle\int _{\Bn} |\nabla_{\Bn}u|^{2}dv_{g_{\Bn}}.$
We denote by $H^{1}_{r}(\Bn) $ the subspace of radially symmetric functions. For functions $u \in H^{1}_{r}(\Bn)$, we consider the scaling
\begin{align}\label{hypscaling}
{u}_\lambda(r)=\lambda^{-\frac12}u\left(G^{-1}(\lambda{G}(r))\right), \qquad \lambda >0.
\end{align}
In \cite{Sandeep--Tintarev}, Sandeep--Tintarev have noted that for any $u\in{H}_r^1(\Bn)$ and $p\geq1$, one has the following invariance property:
\begin{equation*}
\displaystyle\int_{\Bn} |\nabla_{\Bn} u_\lambda|^{2} \,dv_{g_{\Bn}}=\displaystyle\int_{\Bn} |\nabla_{\Bn} u |^{2} \,dv_{g_{\Bn}} \,\,\textand\,\,\displaystyle\int_{\Bn}V_p \abs{u_\lambda}^p \,dv_{g_{\Bn}}=\displaystyle\int_{\Bn}V_p\abs{u}^{p}\,dv_{g_{\Bn}},
\end{equation*}
where
\begin{align}\label{Definition-V_p}
V_p(r):= \dfrac{f(r)^2(1-r^2)^2}{ 4(n-2)^2 G(r)^{\frac{p+2}{2}}}.
\end{align}
In other words, the hyperbolic scaling $r\mapsto{G}^{-1}(\lambda{G}(r))$   is quite analogous to the Euclidean scaling. Indeed, in that case, by taking $\overline{G}(\rho)=\rho^{2-n}$, we see that $\overline{G}^{-1}(\lambda\overline{G}(\rho))=\overline{\lambda}=\lambda^{\frac{1}{2-n}}$ for $\rho=\abs{x}$ in $\R^n$.
Also, note that $\overline{G}$ is --up to a constant-- the fundamental solution of the Euclidean Laplacian $\Delta$ in $\R^{n}$.
The weights $V_{p}$ have the following  asymptotic behaviors, for $n\geq3$ and $p>1$.
\[V_{p}(r)=\begin{cases}
\dfrac{c_0(n,p)}{r^{n(1-p/2^*)}}(1+o(1)) &\textas{r\to0},\\
\dfrac{c_1(n,p)}{(1-r)^{(n-1)(p-2)/2}}(1+o(1)) &\textas{r\to1}.
\end{cases}\]

In particular for $n \geq 3$, the weight $V_{2}(r)= \frac{1}{4(n-2)^{2}}\left( \frac{f(r)(1-r^{2})}{G(r)}\right)^{2} \sim_{ r \to0}\frac{1}{4r^{2}}$, and at $r=1$ has a finite positive value. In other words, the weight $V_{2}$ is qualitatively similar to the Euclidean Hardy weight, and Sandeep--Tintarev  have indeed established  the following Hardy inequality on  the hyperbolic space $\Bn$ (Theorem 3.4  of  \cite{Sandeep--Tintarev}). Also, see \cite{DFP} where they deal with similar Hardy weights.
\begin{equation*}
\dfrac{(n-2)^2}{4}\displaystyle\int_{\Bn}V_2\abs{u}^2\,dv_{g_{\Bn}}\leq\displaystyle\int_{\Bn}\absgradB{u}^2 \,dv_{g_{\Bn}} \quad \hbox{for any } u\in{H}^1(\Bn).
\end{equation*}
They also show in the same paper the following Sobolev inequality, i.e., for some constant $C>0$.
\begin{equation*}
\left(\displaystyle\int_{\Bn}V_{2^*} \abs{u}^{2^*}\, dv_{g_{\Bn}} \right)^{{2}/{2^*}}\leq{C}\displaystyle\int_{\Bn}\absgradB{u}^2\,dv_{g_{\Bn}} \quad \hbox{for any $u\in{H}^1(\Bn)$,}
\end{equation*}
\noindent where $2^*= \frac{2n}{(n-2)}$. By interpolating between these two inequalities taking $0\leq s\leq 2$, one easily obtain the following  Hardy--Sobolev inequality.   
\begin{lem}If $\gamma <\frac{(n-2)^2}{4}$, then there exists a constant $C>0$ such that, for any $u\in{H}^1(\Bn)$,
$$C\left(\displaystyle\int_{\Bn} V_{\crits} \abs{u}^{\crits}\,dv_{g_{\Bn}} \right)^{{2}/{\crits}}\leq \displaystyle\int_{\Bn}\absgradB{u}^2\,dv_{g_{\Bn}}-\gamma\displaystyle\int_{\Bn}V_2\abs{u}^2~ dv_{g_{\Bn}},$$
\noindent where $\crits:= \frac{2(n-s)}{(n-2)}$.
\end{lem}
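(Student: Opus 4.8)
The plan is to obtain the Hardy--Sobolev inequality by interpolating between the Hardy inequality and the Sobolev inequality, both of which are quoted from Sandeep--Tintarev in the excerpt, using H\"older's inequality together with the elementary identity $\crits = \frac{2(n-s)}{n-2}$, which for $0 \le s \le 2$ can be written as a convex combination $\frac{2}{\crits} = \theta \cdot \frac{2}{2} + (1-\theta)\cdot \frac{2}{2^*}$ for a suitable $\theta = \theta(s) \in [0,1]$; concretely $\theta = \frac{s}{2}$ works, since then $1 - \theta = \frac{2-s}{2}$ and $\theta \cdot 1 + (1-\theta)\cdot\frac{n-2}{n} = \frac{s}{2} + \frac{(2-s)(n-2)}{2n} = \frac{n-s}{n} = \frac{2}{\crits}\cdot\frac{n-2}{n}\cdot\frac{n}{n-2}$, i.e. $\frac{\crits}{2} = \left(\frac{s}{2} + \frac{(2-s)(n-2)}{2n}\right)^{-1}$; one checks the arithmetic so that the exponents of $V_2^{\#}$ and $V_{2^*}^{\#}$ combine to give $V_{\crits}$.

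First I would verify the weight bookkeeping: from the definition $V_p(r) = \frac{f(r)^2(1-r^2)^2}{4(n-2)^2 G(r)^{(p+2)/2}}$ one sees that $V_2(r)^{a} V_{2^*}(r)^{b}$ has the same $f,(1-r^2)$-prefactor structure as $V_{\crits}$ precisely when $a + b = 1$, and the $G(r)$-exponent matches when $a\cdot\frac{4}{2} + b\cdot\frac{2^*+2}{2} = \frac{\crits+2}{2}$; solving the two linear equations pins down $a$ and $b$ in terms of $s$, and they come out nonnegative for $0 \le s \le 2$. Then I would apply H\"older's inequality with exponents $\frac{1}{a}$ and $\frac{1}{b}$ to the integral $\int_{\Bn} V_{\crits}\abs{u}^{\crits}\,dv_{g_{\Bn}}$, writing $V_{\crits}\abs{u}^{\crits} = \left(V_2\abs{u}^2\right)^{a\crits/2}\cdot\left(V_{2^*}\abs{u}^{2^*}\right)^{b\crits/2}$ after checking that the powers of $\abs{u}$ also balance, i.e. $a\crits + b\crits\cdot\frac{2^*}{2}\cdot\frac{2}{\crits}$-type identities hold — this is the same arithmetic as before since $\crits$ is forced by the scaling invariance. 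This yields
\[
\int_{\Bn} V_{\crits}\abs{u}^{\crits}\,dv_{g_{\Bn}} \le \left(\int_{\Bn} V_2\abs{u}^2\,dv_{g_{\Bn}}\right)^{a\crits/2}\left(\int_{\Bn} V_{2^*}\abs{u}^{2^*}\,dv_{g_{\Bn}}\right)^{b\crits/2}.
\]

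Next I would raise this to the power $2/\crits$ and bound each factor: the first factor is controlled by $\frac{4}{(n-2)^2}\int_{\Bn}\absgradB{u}^2\,dv_{g_{\Bn}}$ by the Hardy inequality, and the second by $C\int_{\Bn}\absgradB{u}^2\,dv_{g_{\Bn}}$ by the Sobolev inequality, giving $\left(\int V_{\crits}\abs{u}^{\crits}\right)^{2/\crits} \le C'\int\absgradB{u}^2$. To upgrade this to the stated form with the deficit $\int\absgradB{u}^2 - \gamma\int V_2\abs{u}^2$ on the right when $\gamma < \frac{(n-2)^2}{4}$, I would use that $\int\absgradB{u}^2 - \gamma\int V_2\abs{u}^2$ is an equivalent ``energy'' to $\int\absgradB{u}^2$ on $H^1(\Bn)$: indeed by the Hardy inequality, for $\gamma < \frac{(n-2)^2}{4}$ one has the two-sided bound $\left(1 - \tfrac{4\gamma_+}{(n-2)^2}\right)\int\absgradB{u}^2 \le \int\absgradB{u}^2 - \gamma\int V_2\abs{u}^2 \le \left(1 + \tfrac{4\gamma_-}{(n-2)^2}\right)\int\absgradB{u}^2$ (with the obvious modification in sign of $\gamma$), so the two norms are comparable with constants depending only on $n,\gamma$. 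Replacing $C'$ accordingly gives the claim.

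I do not expect a genuine obstacle here; the statement is a standard interpolation and the only thing requiring care is confirming that the H\"older exponents $a = a(s,n)$, $b = b(s,n)$ are exactly those making the weight $V_2^{a}V_{2^*}^{b}$ coincide with $V_{\crits}$ — this is dictated, and guaranteed, by the common scaling invariance \eqref{hypscaling} that all three functionals share, so the arithmetic must close. The mild subtlety is that the interpolation is clean only when $0 \le s \le 2$ (so that $a,b \ge 0$); the endpoints $s=0$ and $s=2$ reduce respectively to the Sobolev and Hardy inequalities themselves. Hence the proof is essentially: identify the convex-combination exponents, apply H\"older, invoke the two quoted inequalities, and absorb the Hardy term using positivity of $L_\gamma$ for $\gamma < \frac{(n-2)^2}{4}$.
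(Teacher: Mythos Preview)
Your approach is correct and essentially identical to the paper's: one verifies $V_{\crits}=V_2^{s/2}V_{2^*}^{(2-s)/2}$ together with $\crits=\frac{s}{2}\cdot 2+\frac{2-s}{2}\cdot 2^*$, applies H\"older with conjugate exponents $\frac{2}{s}$ and $\frac{2}{2-s}$, invokes the quoted Hardy and Sobolev inequalities, and then uses $\int_{\Bn}\absgradB{u}^2-\gamma\int_{\Bn}V_2 u^2\ge\bigl(1-\tfrac{4\gamma}{(n-2)^2}\bigr)\int_{\Bn}\absgradB{u}^2$ for $\gamma<\frac{(n-2)^2}{4}$. The only slip is in your displayed factorization: the correct exponents are simply $a=\frac{s}{2}$ and $b=\frac{2-s}{2}$, so that $V_{\crits}\abs{u}^{\crits}=(V_2\abs{u}^2)^{s/2}(V_{2^*}\abs{u}^{2^*})^{(2-s)/2}$ with exponents summing to $1$, not $a\crits/2$ and $b\crits/2$; with this correction the arithmetic closes exactly as you anticipated.
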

 Note that, up to a positive constant, we have
$V_{\crits} \sim_{r \to 0 } \frac{1}{r^{s}}$, adding to the analogy with the Euclidean case, where we have for any $u\in{H}^1(\R^{n})$,
$$C\left(\displaystyle\int_{\R^n} \frac{\abs{u}^{\crits}} {|x|^s}\,dx \right)^{{2}/{\crits}}  \leq \displaystyle\int_{\R^n}
{\abs{\nabla u}}^2\,dx-\gamma \displaystyle\int_{\R^n} \frac{\abs{u}^{2}} {|x|^2}\,dx. $$
Motivated by the recent progress on the Euclidean Hardy--Schr\"odinger equation (See  for example Ghoussoub--Robert \cite{Ghoussoub--Robert-remaining cases, Ghoussoub--Robert CKN}, and the references therein), we shall consider the problem of existence of extremals for the corresponding best constant, that is
\begin{equation}\label{minpbm2}
\mu_{\gamma, \lambda }(\Omega):=\inf_{u\in{H_{0}^1(\Omega)}\backslash\set{0}}\dfrac{\displaystyle\displaystyle\int _{\Omega}\absgradB{u}^2\,dv_{g_{\Bn}}-\gamma\displaystyle\int _{\Omega} V_2 \abs{u}^2\,dv_{g_{\Bn}}- \lambda \displaystyle\int _{\Omega}  \abs{u}^2\,dv_{g_{\Bn}}}{\left(\displaystyle\displaystyle\int _{\Omega} V_{\crits} \abs{u}^{\crits} \,dv_{g_{\Bn}} \right)^{{2}/{\crits}}},
\end{equation}
where  $H_{0}^1(\Omega)$ is the completion of $C_{c}^{\infty}(\Omega)$ with respect to the norm $\| u\|= \sqrt{\displaystyle\int_{\Omega} |\nabla u|^{2}~ dv_{g_{\Bn}} }$.
Similarly to the Euclidean case, and once restricted to radial functions, the general Hardy--Sobolev inequality  for the hyperbolic  Hardy--Schr\"odinger operator is invariant under hyperbolic scaling described in \eqref{hypscaling},  
This invariance 
makes the corresponding variational problem non-compact and the problem of existence of minimizers quite interesting.
\medskip

In Proposition \ref{Prop:Explicit formula on Bn}, we start by showing that the extremals for the minimization problem \eqref{minpbm2}   in the class of radial functions $H^{1}_{r}(\Bn)$
can be written explicitly as:
\begin{equation*}
U(r)=c\left(G(r)^{-\frac{2-s}{n-2}\Am}+G(r)^{-\frac{2-s}{n-2}\Ap}\right)^{-\frac{n-2}{2-s}},
\end{equation*}
where $c$ is a positive constant and  $\alpha_\pm(\gamma)$ satisfy
$$\alpha_\pm(\gamma)= \dfrac{1}{2}\pm\sqrt{\frac{1}{4}-\frac{\gamma}{(n-2)^{2}}}.$$
In other words, we show that
\begin{equation}\label{minpbmrad}
\mu^{\textrm{rad}}_{\gamma, 0}(\Bn):=\inf_{u\in{H_{r}^1(\Bn)}\backslash\set{0}} \dfrac{\displaystyle\displaystyle\int_{\Bn}\absgradB{u}^2\,dv_{g_{\Bn}}-\gamma\displaystyle\int_{\Bn} V_2 \abs{u}^2\,dv_{g_{\Bn}}}{\displaystyle\left(\displaystyle\int_{\Bn} V_{\crits} \abs{u}^{\crits} \,dv_{g_{\Bn}} \right)^{{2}/{\crits}}}
\end{equation}
is attained by $U$.

\medskip

Note that  the radial function $G^{\alpha}(r)$ is a solution of $-\Delta_{\Bn}u-\gamma{V_2}u=0$ on $\Bn \setminus \{ 0\}$ if and only if  $\alpha = \alpha _\pm(\gamma)$. These solutions have the following asymptotic behavior
\begin{equation*}
G(r)^{\alpha_\pm(\gamma)}\sim c(n, \gamma) r^{-  \beta_\pm(\gamma)} \textas{r\to0},
\end{equation*}
where  $$\beta_\pm (\gamma)=\dfrac{n-2}{2}\pm\sqrt{\frac{(n-2)^2}{4}-\gamma}.$$
These then yield positive solutions to the equation
$$-\Delta_{\Bn}u-\gamma{V_2}u=V_{\crits}u^{\crits-1}\quad\textin\Bn.$$
\noindent
We point out the paper \cite{MS} (also see \cite{BS, BGGV, GS}), where the authors considered the counterpart of the Brezis--Nirenberg problem on $\Bn$ ($n \geq 3$), and discuss issues of  existence and non-existence  for the equation
$$-\Delta_{\Bn}u-\lambda u=u^{2^*-1}\quad\textin\Bn,$$
in the absence of a Hardy potential.

Next, we consider the attainability of $\mu_{\gamma, \lambda }(\Omega)$ in subdomains of $\Bn$ without necessarily any symmetry. In other words, we will search for positive solutions for the equation
\begin{equation}\label{main.eq}
\left\{ \begin{array}{@{}l@{\;}ll}
-\Delta_{\Bn}u-\gamma{V_2}u -\lambda u&=V_{\crits}u^{\crits-1} &\textin\Omega \\
\hfill u &\geq 0  &\textin\Omega  \\
\hfill u &=0   & \texton  \partial \Omega,
\end{array} \right.
\end{equation}
where $\Omega$ is a compact smooth subdomain of  $\Bn$ such that $ 0 \in \Omega$, but $\overline{\Omega}$ does not touch the boundary of $\Bn$ and $\lambda \in \R$. Note that the metric is then smooth on such $\Omega$, and the only singularity we will be dealing with will be coming from the Hardy-type potential  
$V_{2}$ and the Hardy--Sobolev weight $V_{\crits}$, which behaves like $\frac{1}{r^{2}}$
 (resp., $\frac{1}{r^{s}}$) at the origin.
This is analogous to the Euclidean problem on bounded domains considered by Ghoussoub--Robert \cite{Ghoussoub--Robert-remaining cases, Ghoussoub--Robert CKN}.
We shall therefore rely heavily on their work, at least in dimensions $n\geq 5$. Actually, once we perform a conformal transformation, the equation above reduces to the study of the following type of problems on bounded domains in $\R^n$:

\begin{equation*}  
\left\{ \begin{array}{@{}l@{\;}ll}
-\Delta v  - \left( \frac{\gamma}{|x|^{2}} + h(x)\right)  v &=b(x) \frac{v^{\crits-1}}{|x|^{s}} &\textin\Omega \\
\hfill v& \geq 0 & \textin\Omega\\
\hfill v &=0   & \hbox{ on }  \partial \Omega,
\end{array} \right.
\end{equation*}
where $b $ is a positive function in $C^1(\overline{\Omega})$  with
\begin{align}\label{def:b}
b(0)=\frac{(n-2)^{\frac{n-s}{n-2}}}{2^{2-s}} ~ \hbox{ and }~ \nabla b(0)=0,
\end{align}
and
\begin{align*}
h_{\gamma,\lambda}(x)= \gamma a(x)+\frac{4\lambda - {n(n-2)}}{(1-|x|^{2})^{2}}    ,
\end{align*}
\begin{align}\label{def:a}
a(x)= a(r)=
\left \{ \begin{array} {lc}
            \frac{4}{r} \left( \frac{1}{1-r}\right)+ \frac{4}{(1-r)^{2}} \qquad    &\hbox{ when }~ n=3, \\
           $~$\\
              8 \log \frac{1}{r}-4 +g_{4}(r) \qquad  &\hbox{ when }~ n=4,\\
           $~$\\
              \frac{4(n-2)}{n-4}+ r g_{n}(r) \qquad   &\hbox{ when }~ n\geq5.
            \end{array} \right.
\end{align}
with $g_{n}(0)=0$, for all $n \geq 4$.
Ghoussoub--Robert \cite{Ghoussoub--Robert-remaining cases} have recently tackled such an equation, but in the case where $h(x)$ and $b(x)$ are constants. We shall explore here the extent of which their techniques could be extended to this setting. To start with,
the following regularity result will then follow immediately.
\begin{theorem}[Regularity] \label{regularity} Let $\Omega \Subset \Bn$, $n \geq 3$,  and $\gamma <  \frac{(n-2)^{2}}{4}$.  If  $u \not\equiv 0$  is a non-negative  weak solution of the equation \eqref{main.eq} in  the hyperbolic Sobolev space $ H^{1}(\Omega)$, then
$$\lim _{ |x| \to 0}~ \frac{u(x)} {G(|x|)^{\alpha_{-}}} = K >0. $$
\end{theorem}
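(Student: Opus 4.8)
The plan is to reduce the regularity statement on $\Bn$ to the already-studied Euclidean Hardy--Schr\"odinger problem via the conformal change described in the excerpt, and then invoke the optimal-regularity / boundary-behavior results of Ghoussoub--Robert. First I would write $u = \varphi^{-1} v$ where $\varphi = \left(\frac{2}{1-|x|^2}\right)^{\frac{n-2}{2}}$ is the conformal factor relating $g_{\Bn}$ and $g_{\mathrm{Eucl}}$, so that the non-negative weak solution $u\in H^1(\Omega)$ of \eqref{main.eq} corresponds to a non-negative weak solution $v\in H^1(\Omega)$ (ordinary Euclidean Sobolev space, since $\overline\Omega$ is a compact subset of the open ball where $\varphi$ is smooth and bounded above and below) of
\begin{equation*}
-\Delta v - \left(\frac{\gamma}{|x|^2} + h_{\gamma,\lambda}(x)\right) v = b(x)\frac{v^{\crits-1}}{|x|^s} \quad \textin \Omega,
\end{equation*}
with $b\in C^1(\overline\Omega)$ positive, $b(0)$ as in \eqref{def:b}, and $h_{\gamma,\lambda}$ as in \eqref{def:a}; in particular $h_{\gamma,\lambda}\in L^p(\Omega)$ for some $p>n/2$ (indeed $h_{\gamma,\lambda}$ is bounded when $n\geq 5$, has a logarithmic singularity when $n=4$, and an $r^{-1}$ singularity when $n=3$, all of which are subcritical relative to the Hardy potential), so the lower-order term is a genuine perturbation of the model operator $-\Delta - \gamma|x|^{-2}$.

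Next I would record that, by the Hardy--Sobolev embedding and a Moser-type iteration adapted to the Hardy potential (as in Ghoussoub--Robert), any such $v$ is bounded away from $0$ and satisfies $v\in C^{0,\beta}_{\mathrm{loc}}(\overline\Omega\setminus\{0\})$, so the only issue is the behavior as $x\to 0$. At the origin one appeals to the sharp two-sided blow-up estimates for non-negative solutions of Hardy--Schr\"odinger equations: there exist $0<c_1\le c_2$ with $c_1|x|^{-\beta_-(\gamma)}\le v(x)\le c_2|x|^{-\beta_-(\gamma)}$ near $0$, where $\beta_\pm(\gamma)=\frac{n-2}{2}\pm\sqrt{\frac{(n-2)^2}{4}-\gamma}$ are exactly the exponents appearing in the excerpt. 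Here one uses that $\crits - 1 = \frac{n-s}{n-2}+\frac{n-2s+2}{\ldots}$ — more precisely that the nonlinear term $b(x)|x|^{-s}v^{\crits-1}$ decays strictly faster than $|x|^{-2}v$ near $0$ (because $v\sim|x|^{-\beta_-}$ forces $|x|^{-s}v^{\crits-1}\sim|x|^{-s-\beta_-(\crits-1)}$, and one checks $s+\beta_-(\crits-1)<\beta_-+2$ given $\gamma<\frac{(n-2)^2}{4}$), so near the origin $v$ behaves like a solution of the \emph{linear} equation $-\Delta v - \gamma|x|^{-2}v = (\text{lower order})$, whose non-negative solutions are governed by the smaller Frobenius exponent $\beta_-$.

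The refinement to an exact limit is the heart of the matter: I would show $\lim_{|x|\to 0} v(x)/|x|^{-\beta_-(\gamma)} = K>0$ by the standard Ghoussoub--Robert argument, namely comparing $v$ with $|x|^{-\beta_-}$ and $|x|^{-\beta_+}$ (the two homogeneous solutions of the Euclidean model) and using the maximum principle / a Harnack inequality on dyadic annuli to control the oscillation of $w(x):=|x|^{\beta_-}v(x)$, showing $w$ extends continuously to $0$ with a positive value; the perturbation terms $h_{\gamma,\lambda}v$ and $b|x|^{-s}v^{\crits-1}$ contribute only a $o(1)$ (or $O(|x|^\epsilon)$) correction precisely because of the strict inequality of exponents just mentioned, and when $n=3,4$ one must be slightly careful that the logarithmic or $r^{-1}$ singularity of $a$ is still integrable enough not to destroy the limit (this is why one needs $p>n/2$ integrability of $h_{\gamma,\lambda}$, which holds since $\log\frac1r\in L^p$ for all $p$ and $r^{-1}\in L^p(\R^3)$ for $p<3$). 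Finally I would translate back via $G(r)^{\alpha_-(\gamma)}\sim c(n,\gamma)r^{-\beta_-(\gamma)}$ as $r\to 0$ (stated in the excerpt) and $u = \varphi^{-1}v$ with $\varphi(0)$ a positive constant, to conclude $\lim_{|x|\to0} u(x)/G(|x|)^{\alpha_-} = K>0$. The main obstacle is not the conformal reduction (which is routine) but importing the Ghoussoub--Robert blow-up analysis in the form needed here, since their results in \cite{Ghoussoub--Robert-remaining cases} are stated for \emph{constant} $h$ and $b$, whereas here $b$ is only $C^1$ with $b(0),\nabla b(0)$ prescribed and $h=h_{\gamma,\lambda}$ is possibly singular in low dimensions; one must check that their De Giorgi--Nash--Moser iteration and the Hopf-type comparison at $0$ survive these perturbations, which they do precisely under the hypotheses \eqref{def:b}, \eqref{def:a} — and that verification is what the proof will actually carry out.
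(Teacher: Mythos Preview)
Your proposal is correct and follows essentially the same approach as the paper: the conformal change $v=\varphi u$ of Lemma~\ref{Lemma:relation between Hyp and Euc solution on bounded domain} reduces \eqref{main.eq} to the Euclidean Hardy--Schr\"odinger equation \eqref{Problem on bounded domain:Euclidean}, after which the paper simply declares that Theorem~\ref{regularity} ``follows immediately'' from the optimal regularity result of Ghoussoub--Robert \cite[Theorem~8]{Ghoussoub--Robert-remaining cases}, and translates back via $G(r)^{\alpha_-}\sim c\,r^{-\beta_-}$. In fact you supply more detail than the paper does---your check that the nonlinear term and the perturbation $h_{\gamma,\lambda}$ are subcritical (so that the Ghoussoub--Robert iteration applies with variable $b$ and mildly singular $h$) is precisely the verification the paper leaves implicit.
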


We also need to define a notion of mass of a domain associated to the operator $-\Delta_{\Bn}-\gamma{V_2}-\lambda$. We therefore show the following.

 \begin{theorem}[{The hyperbolic Hardy-singular mass of $\Omega \Subset \Bn$}]\label{hypermassI}
Let $0\in \Omega \Subset \Bn$, $n \geq 3$,  and $\gamma < \frac{(n-2)^{2}}{4}$. Let $\lambda \in \R$ be  such that the operator $-\Delta_{\Bn}-\gamma{V_2}-\lambda $  is coercive. Then, there exists a solution $K_\Omega \in C^{\infty} \left( \overline{\Omega} \setminus \{ 0\}   \right)$ to the linear problem,
\begin{eqnarray}
\left\{ \begin{array}{@{}l@{\;}ll}
-\Delta_{\Bn}K_\Omega-\gamma{V_2}K_\Omega -\lambda K_\Omega&=0 &\textin\Omega \\
\hfill K_\Omega &\geq 0  &\textin\Omega  \\
\hfill K_\Omega &=0   & \hbox{ on }  \partial \Omega,
\end{array} \right.
\end{eqnarray}
such that
$K_\Omega(x) \simeq_{|x| \to 0} c \ G(|x|)^{\alpha_{+}}$ for some positive constant $c$.
Furthermore,
\begin{enumerate}
\item If $K'_\Omega \in C^{\infty} \left( \overline{\Omega} \setminus \{ 0\}   \right)$ is another solution of the above linear equation, then  there exists a $C>0$ such that $K'_\Omega=C K_\Omega$. %
\item If $\gamma > \max \left\{ \frac{n(n-4)}{4},0 \right\}$, then  there exists $m^H_{\gamma, \lambda}(\Omega) \in \R$  such that
\begin{equation}
 K_\Omega(x)=G(|x|)^{\alpha_{+}} + m^H_{\gamma, \lambda}(\Omega) G(|x|)^{\alpha_{-}}+ o \left(G(|x|)^{\alpha_{-}} \right) \qquad \hbox{ as } x \to 0.
 \end{equation}
The constant $m^H_{\gamma, \lambda}(\Omega)$ will be referred to as the {\bf hyperbolic mass} of the domain $\Omega$ associated with the operator $\displaystyle -\Delta_{\Bn}-\gamma{V_2}-\lambda$.  
\end{enumerate}
\end{theorem}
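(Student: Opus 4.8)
The plan is to produce $K_\Omega$ by correcting the explicit singular solution $G(|x|)^{\alpha_+}$ --- which, as noted above, solves $-\Delta_{\Bn}u-\gamma V_2 u=0$ on $\Bn\setminus\{0\}$ --- by a term of strictly lower order at the origin, and then to read off the mass from the resulting expansion at $0$. After the conformal change described above, this amounts to constructing the analogous object for the Euclidean operator $-\Delta-\gamma|x|^{-2}-h_{\gamma,\lambda}(x)$ on a bounded domain, which is precisely the program of Ghoussoub--Robert \cite{Ghoussoub--Robert-remaining cases, Ghoussoub--Robert CKN}; the genuinely new feature here is that the lower-order coefficient $h_{\gamma,\lambda}$ is now a function that is mildly singular at $0$ (of order $|x|^{-1}$ when $n=3$, of order $\log\tfrac{1}{|x|}$ when $n=4$, bounded for $n\geq5$, by \eqref{def:a}), rather than a constant; since in every case it is subcritical relative to the Hardy term $|x|^{-2}$, it enters only as a perturbation. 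Concretely, I would fix $\eta\in C_c^\infty(\Omega)$ with $\eta\equiv1$ near $0$ and set $w:=\eta\,G(|x|)^{\alpha_+}$; since $-\Delta_{\Bn}G^{\alpha_+}-\gamma V_2 G^{\alpha_+}=0$ near $0$, the defect $F:=-\bigl(-\Delta_{\Bn}-\gamma V_2-\lambda\bigr)w$ equals $\lambda\,G(|x|)^{\alpha_+}$ in a neighbourhood of $0$ and is smooth on $\overline{\Omega}\setminus\{0\}$, so $|F(x)|\lesssim|x|^{-\beta_+}$ as $|x|\to0$. One then solves $\bigl(-\Delta_{\Bn}-\gamma V_2-\lambda\bigr)\beta=F$ in $\Omega$ with $\beta=0$ on $\partial\Omega$ --- whenever $F\in H^{-1}(\Omega)$, which happens as soon as $\beta_+<\tfrac{n+2}{2}$ and in particular whenever $\gamma>\tfrac{n(n-4)}{4}$, this is immediate from the coercivity hypothesis via Lax--Milgram; for small $\gamma$ one first peels off from $w$ a finite hierarchy of explicit correction terms, obtained by solving the indicial (radial) equation at successively lower orders of singularity, until the remaining defect lies in $H^{-1}(\Omega)$ (equivalently one works in the weighted space attached to the ground state of $-\Delta_{\Bn}-\gamma V_2$). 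Setting $K_\Omega:=w+\beta$ (plus those corrections) yields a solution of the homogeneous equation; interior elliptic regularity gives $K_\Omega\in C^\infty(\overline{\Omega}\setminus\{0\})$, the comparison principle (a consequence of coercivity) together with $K_\Omega\to+\infty$ at $0$ and $K_\Omega=0$ on $\partial\Omega$ gives $K_\Omega\geq0$, and the asymptotic analysis underlying Theorem~\ref{regularity}, now applied to this ``large'' solution, shows that $\beta$ and the corrections are strictly less singular than $G^{\alpha_+}$, hence $K_\Omega(x)\simeq_{|x|\to0} c\,G(|x|)^{\alpha_+}$ for some $c>0$.

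For the uniqueness statement, if $K'_\Omega$ is a second such solution with $K'_\Omega(x)\simeq_{|x|\to0} c'\,G(|x|)^{\alpha_+}$, then $\widetilde K:=c'K_\Omega-cK'_\Omega$ solves the homogeneous equation in $\Omega$, vanishes on $\partial\Omega$, and is $o\bigl(G(|x|)^{\alpha_+}\bigr)$ near $0$. A Frobenius-type expansion of solutions at the singularity --- whose admissible leading exponents are the indicial roots $-\beta_\pm$ of $-\Delta_{\Bn}-\gamma V_2$, with no logarithms since $\beta_+\neq\beta_-$ --- then forces $\widetilde K(x)=O\bigl(G(|x|)^{\alpha_-}\bigr)$. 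Since $\beta_-<\tfrac{n-2}{2}$, the function $G^{\alpha_-}$ is locally in $H^1$, so $\widetilde K\in H_0^1(\Omega)$; coercivity gives $\widetilde K\equiv0$, that is $K'_\Omega=\tfrac{c'}{c}K_\Omega$.

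For the mass, assume $\gamma>\max\bigl\{\tfrac{n(n-4)}{4},0\bigr\}$ and normalise $c=1$, so that $K_\Omega=\eta\,G^{\alpha_+}+\beta_0$ with $\bigl(-\Delta_{\Bn}-\gamma V_2-\lambda\bigr)\beta_0=\lambda\,G^{\alpha_+}$ near $0$ and smooth away from it. The decisive elementary computation is that for $-\Delta_{\Bn}-\gamma V_2$ the source $|x|^{-\beta_+}$ is non-resonant precisely when $\beta_+-\beta_-\neq2$, i.e. when $\gamma\neq\tfrac{n(n-4)}{4}$, in which case it admits a particular solution of size $|x|^{2-\beta_+}$; and $\gamma>\tfrac{n(n-4)}{4}$ is exactly the inequality $2-\beta_+>-\beta_-$, so this particular solution --- and, iterating, all further corrections coming from the $\lambda$-term and from the subcritical perturbation $h_{\gamma,\lambda}$ --- is strictly less singular than $G^{\alpha_-}\simeq|x|^{-\beta_-}$. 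Peeling these off from $\beta_0$ leaves a homogeneous remainder which, by the expansion of the previous paragraph and the normalisation $c=1$ (no $G^{\alpha_+}$ component), equals $m\,G^{\alpha_-}+o\bigl(G^{\alpha_-}\bigr)$. Collecting all contributions of order at most $-\beta_-$ and using $\beta_->0$ (valid since $\gamma>0$) one obtains
\[
K_\Omega(x)=G(|x|)^{\alpha_+}+m^H_{\gamma,\lambda}(\Omega)\,G(|x|)^{\alpha_-}+o\!\left(G(|x|)^{\alpha_-}\right)\qquad\text{as }x\to0,
\]
and $m^H_{\gamma,\lambda}(\Omega)\in\R$ is well defined since, by the uniqueness just established, the normalised $K_\Omega$ is unique.

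I expect the main obstacle to be the sharp local analysis at the singular point: verifying that the corrections introduced above are genuinely of lower order (so that $K_\Omega\simeq c\,G^{\alpha_+}$, with no spurious singular term), establishing the Frobenius dichotomy used in the uniqueness and mass steps in the presence of the mildly singular coefficient $h_{\gamma,\lambda}$, and --- for small $\gamma$, where $\beta_+$ approaches $\tfrac{n-2}{2}$ and the raw defect need not lie in $H^{-1}$ --- organising the finite list of explicit singular corrections and barrier estimates that makes the coercivity of $-\Delta_{\Bn}-\gamma V_2-\lambda$ usable. After the conformal reduction these are exactly the estimates of Ghoussoub--Robert \cite{Ghoussoub--Robert-remaining cases}, carried out with the additional bookkeeping forced by the non-constant lower-order term.
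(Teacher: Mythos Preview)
Your approach is correct and is essentially the paper's: conformally reduce to the Euclidean operator $-\Delta-\gamma|x|^{-2}-h_{\gamma,\lambda}(x)$ and then run the Ghoussoub--Robert construction with the minor bookkeeping needed for the mildly singular lower-order term, which is exactly the content of Proposition~\ref{Propoaition -Regularity} together with Lemma~\ref{Lemma:relation between Hyp and Euc solution on bounded domain}. The only cosmetic difference is that the paper extracts the mass expansion on the Euclidean side with ansatz $\eta\,|x|^{-\beta_+}$ and shows the $H_0^1$ correction $f$ satisfies $|x|^{\beta_-}f(x)\to c_2$ via a barrier argument plus the optimal regularity theorem of \cite{Ghoussoub--Robert-remaining cases}, rather than building the Frobenius-type expansion term by term as you sketch; your ``peeling'' heuristic is precisely what that regularity theorem makes rigorous.
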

\medskip

And just like the Euclidean case, solutions exist in high dimensions, while the positivity of the ``hyperbolic mass" will be needed for low dimensions. More precisely,

\begin{theorem}\label{Thm:Main Result}
 Let $\Omega \Subset \Bn$ $(n \geq 3)$ be a smooth domain with $0 \in \Omega $,  $0 \leq  \gamma < \frac{ (n-2)^{2}}{4}$ and let $\lambda \in \R$ be  such that the operator $-\Delta_{\Bn}-\gamma{V_2}-\lambda $  is coercive.
Then, the best constant $\mu_{\gamma, \lambda}(\Omega)$
is attained  under the following conditions:
\begin{enumerate}
\item $n >4$, $\gamma \leq \displaystyle  \frac{n(n-4)}{4}$ and
 $\displaystyle \lambda  > \frac{n-2}{n-4} \left(\frac{n(n-4)}{4}-\gamma \right)$.
\item $\max \left\{ \frac{n(n-4)}{4},0 \right\} <\gamma < \displaystyle  \frac{(n-2)^{2}}{4}$ and
 $m^H_{\gamma, \lambda}(\Omega)>0$.
\end{enumerate}
\end{theorem}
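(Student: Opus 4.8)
\medskip

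\noindent\emph{Proof strategy.} The plan is to combine a concentration--compactness reduction with a dimension-- and $\gamma$--dependent choice of test functions; in the delicate regime the decisive quantity will be the hyperbolic mass $m^H_{\gamma,\lambda}(\Omega)$ of Theorem~\ref{hypermassI}. Write $\mu_\star:=\mu^{\mathrm{rad}}_{\gamma,0}(\Bn)$ for the constant of \eqref{minpbmrad}, which by Proposition~\ref{Prop:Explicit formula on Bn} is attained by the explicit radial extremal $U$, and which is the sharp energy of a bubble centred at an interior point. Since $\gamma<\frac{(n-2)^2}{4}$, the Hardy--Sobolev inequality stated above together with the assumed coercivity of $-\Delta_{\Bn}-\gamma V_2-\lambda$ makes the numerator of \eqref{minpbm2} equivalent to $\|u\|_{H^1_0(\Omega)}^2$, so $\mu_{\gamma,\lambda}(\Omega)>0$ and minimizing sequences are bounded. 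Passing to the Euclidean picture through the conformal change of variables recalled before Theorem~\ref{regularity} --- which turns \eqref{main.eq} into the Euclidean Hardy--Sobolev problem with the bounded potential $h_{\gamma,\lambda}$ and the positive weight $b\in C^1(\overline\Omega)$ of \eqref{def:b} --- one runs the Brezis--Lieb / concentration--compactness analysis of Ghoussoub--Robert \cite{Ghoussoub--Robert-remaining cases}: a minimizing sequence either converges strongly, in which case $\mu_{\gamma,\lambda}(\Omega)$ is attained, or it splits off a standard bubble centred at $0$ carrying energy at least $\mu_\star$; concentration away from $0$ cannot occur because there $V_{\crits}$ is bounded and the exponent $\crits<2^*$ is subcritical (hence the relevant embedding is compact), while coercivity rules out vanishing. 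Thus everything reduces to proving the strict inequality $\mu_{\gamma,\lambda}(\Omega)<\mu_\star$ under each of the hypotheses (1) and (2).

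\smallskip

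For case (1) I would test \eqref{minpbm2} with (the conformal transport of) a radial truncation $\eta\,U_\epsilon$ of the rescaled extremals $U_\epsilon(x)=\epsilon^{-\frac{n-2}{2}}U(x/\epsilon)$, which behave like $|x|^{-\beta_-(\gamma)}$ near $0$ and like $|x|^{-\beta_+(\gamma)}$ near $\p\Omega$. The leading term of the quotient is $\mu_\star$ --- here one uses the normalisation $b(0)=\frac{(n-2)^{(n-s)/(n-2)}}{2^{2-s}}$ from \eqref{def:b} to match the constant and $\nabla b(0)=0$ to remove the would-be $O(\epsilon)$ term coming from $b$. Since $n>4$ and $\gamma\le\frac{n(n-4)}{4}$ we have $\beta_+(\gamma)-\beta_-(\gamma)=\sqrt{(n-2)^2-4\gamma}\ge 2$, so the boundary truncation error, of size $O\big(\epsilon^{\beta_+-\beta_-}\big)$, is dominated by the subcritical contribution $-\int h_{\gamma,\lambda}(\eta U_\epsilon)^2$ (which moreover carries an extra $\log\frac1\epsilon$ in the borderline case $\gamma=\frac{n(n-4)}{4}$, where $\int U_\epsilon^2$ diverges logarithmically). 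A computation adapting the Euclidean one, using $a(0)=\frac{4(n-2)}{n-4}$ (the value in \eqref{def:a} for $n\ge5$), then shows that the leading correction has the sign of
\[
h_{\gamma,\lambda}(0)=\frac{4(n-2)}{n-4}\,\gamma+4\lambda-n(n-2),
\]
which is strictly positive precisely when $\lambda>\frac{n-2}{n-4}\big(\frac{n(n-4)}{4}-\gamma\big)$. Hence $\mu_{\gamma,\lambda}(\Omega)<\mu_\star$ for $\epsilon$ small, giving attainability.

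\smallskip

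Case (2) is the subtle one. Now $\max\{\frac{n(n-4)}{4},0\}<\gamma<\frac{(n-2)^2}{4}$ forces $\beta_+(\gamma)-\beta_-(\gamma)<2$, so the boundary truncation error and the subcritical contribution are \emph{both} of order $\epsilon^{\beta_+-\beta_-}$ and their combined coefficient cannot be signed by crude estimates. Following Ghoussoub--Robert, the remedy is a sharper test function: glue $U_\epsilon$ (accurate near $0$) to a suitable multiple of the function $K_\Omega$ of Theorem~\ref{hypermassI}, which solves the linear equation exactly, vanishes on $\p\Omega$, and satisfies $K_\Omega=G(|x|)^{\alpha_+}+m^H_{\gamma,\lambda}(\Omega)\,G(|x|)^{\alpha_-}+o\big(G(|x|)^{\alpha_-}\big)$. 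Inserting this function in \eqref{minpbm2}, integrating the cross terms by parts against the linear equation, and identifying the $G^{\alpha_-}$--coefficient of $K_\Omega$ with the mass, one should arrive at
\[
\mu_{\gamma,\lambda}(\Omega)\ \le\ \mu_\star\;-\;c\,m^H_{\gamma,\lambda}(\Omega)\,\epsilon^{\beta_+(\gamma)-\beta_-(\gamma)}\;+\;o\big(\epsilon^{\beta_+(\gamma)-\beta_-(\gamma)}\big),\qquad c>0,
\]
so that $m^H_{\gamma,\lambda}(\Omega)>0$ again yields $\mu_{\gamma,\lambda}(\Omega)<\mu_\star$. In both cases the minimizer then exists by the first step, and Theorem~\ref{regularity} shows it has the asserted behaviour at $0$, hence is a positive solution of \eqref{main.eq}.

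\smallskip

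The main obstacle is the energy expansion in case (2): one must design the glued test function so that the $\epsilon^{\beta_+-\beta_-}$--coefficient is exactly a fixed negative constant times the mass, and control every remainder --- especially the borderline-integrable ones that appear at special values of $\beta_\pm(\gamma)$ and force logarithmic cut-offs. A genuine secondary difficulty, absent from \cite{Ghoussoub--Robert-remaining cases}, is that $h_{\gamma,\lambda}$ and $b$ are non-constant: one has to verify that the lower-order coefficient $a(x)$ of \eqref{def:a} and the deviation of $b$ from $b(0)$ do not contribute spurious terms at order $\epsilon^{\beta_+-\beta_-}$ (in case (1), that they remain of higher order than $\epsilon^2$ up to harmless sign), which is precisely what the explicit construction of $K_\Omega$ in Theorem~\ref{hypermassI} and the regularity statement Theorem~\ref{regularity} are set up to ensure.
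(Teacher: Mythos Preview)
Your proposal is correct and follows essentially the same route as the paper: reduce to the Euclidean problem via the conformal change $v=\varphi u$ (which the paper makes explicit as Lemma~\ref{Lemma:relation between Hyp and Euc solution on bounded domain}), invoke the low-energy compactness criterion $\mu_{\gamma,h}(\Omega)<\mu_{\gamma,0}(\R^n)/b(0)^{2/\crits}$, then verify this strict inequality with truncated bubbles $\eta U_\epsilon$ in case~(1) and with the glued test functions $\eta U_\epsilon+\epsilon^{(\beta_+-\beta_-)/2}\beta$ in case~(2), the latter producing exactly $-c\,m_{\gamma,h}(\Omega)\,\epsilon^{\beta_+-\beta_-}$ as leading correction. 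The paper packages the Euclidean step as a separate Theorem~\ref{Thm:minexist:Euclidean} (allowing the mildly singular $h$ of \eqref{eq:singh}, which is what absorbs the $n=3,4$ behaviour of $a(x)$), but the test functions, the order-of-magnitude bookkeeping, and the identification of $h_{\gamma,\lambda}(0)$ with the sign of the correction in case~(1) are exactly as you outline.
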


\medskip
\noindent
As mentioned above, the above theorem will be proved by
using a conformal transformation that reduces the problem to the Euclidean case, already considered by Ghoussoub--Robert \cite{Ghoussoub--Robert-remaining cases}. Actually, this leads to the following variation of the problem they considered, where the perturbation can be singular but not as much as the Hardy potential.

\begin{theorem}{\label{Thm:minexist:Euclidean}}
Let $\Omega $ be a bounded smooth  domain in $\R^{n}$, $n \geq 3$, with $0 \in \Omega $ and $0 \leq  \gamma < \frac{ (n-2)^{2}}{4}$. Let $\displaystyle  h \in C^1(\overline{\Omega} \setminus \{0\})$   be such that
\begin{equation}\label{eq:singh}
h(x) =  -  \C_{1} |x|^{-\theta}\log|x|+\tilde{h}(x)  \hbox { where } \lim _{x \to 0} |x|^{\theta}\tilde{h}(x) =  \C_{2} \hbox{ for  some } 0\le \theta <2 \hbox{ and }  \C_{1}, \C_{2} \in \R,
\end{equation}
and the operator $ - \Delta - \left(  \frac{\gamma }{|x|^2}+h(x)\right)$ is coercive. Also, assume that  $b(x)$ is a non-negative function in $ C^{1}(\overline{\Omega})$ with $b(0) > 0$. Then the best constant
\begin{equation}\label{eq:mu in Rn}
\mu_{\gamma,h}(\Omega) : =\inf_{u\in{H_{0}^1}(\Omega)\setminus\set{0}} \dfrac{\displaystyle\displaystyle\int _{\Omega} \left(\absgrad{u}^2 -\left( \frac{\gamma }{|x|^{2}}+ h(x) \right) u^{2}\right) \,dx}{\displaystyle\left(\displaystyle\int _{\Omega}  b(x) \frac{|u|^{\crits}}{|x|^{s}} \,dx\right)^{{2}/{\crits}}}
\end{equation}
is attained if one of the following two conditions is satisfied:
\begin{enumerate}
\item
$\gamma \leq  \frac{(n-2)^2 }{4} - \frac{(2-\theta)^2}{4}$ and, either $\C_{1}>0$ or $\{\C_{1}=0$, $\C_{2}>0\}$;
\item
$\frac{(n-2)^2 }{4} - \frac{(2-\theta)^2}{4}< \gamma < \frac{(n-2)^{2}}{4}$ and $m_{\gamma, h}(\Omega)>0$, where $m_{\gamma, h}(\Omega)$ is the mass of the domain $\Omega$ associated to the operator $  -\Delta- \left(\frac{\gamma}{|x|^{2}}+h(x)\right)$. 
\end{enumerate}
\end{theorem}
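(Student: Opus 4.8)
The plan is to run the standard variational scheme for Hardy--Sobolev problems, following Ghoussoub--Robert \cite{Ghoussoub--Robert-remaining cases} and upgrading their estimates to accommodate the variable weight $b$ and the mildly singular perturbation $h$. Write $\mu_{\gamma,s}(\R^n)$ for the best constant in the Hardy--Sobolev inequality $C\bigl(\int_{\R^n}|u|^{\crits}|x|^{-s}\,dx\bigr)^{2/\crits}\le\int_{\R^n}|\nabla u|^2\,dx-\gamma\int_{\R^n}u^2|x|^{-2}\,dx$; it is attained by explicit radial extremals $U$ solving $-\Delta U-\gamma U|x|^{-2}=U^{\crits-1}|x|^{-s}$, with $U(x)\sim|x|^{-\Bm}$ as $x\to0$ and $U(x)\sim|x|^{-\Bp}$ as $x\to\infty$. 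Truncating and rescaling $U$ gives $\mu_{\gamma,h}(\Omega)\le b(0)^{-2/\crits}\mu_{\gamma,s}(\R^n)$; the whole point is the strict inequality
\[
\mu_{\gamma,h}(\Omega)< b(0)^{-2/\crits}\,\mu_{\gamma,s}(\R^n).
\]
Granting it, one runs the concentration-compactness/blow-up analysis of \cite{Ghoussoub--Robert-remaining cases} on a minimizing sequence for \eqref{eq:mu in Rn}: coercivity gives boundedness in $H_0^1(\Omega)$, the only obstruction to strong convergence is concentration at a point of $\overline\Omega$, concentration at the origin produces exactly $b(0)^{-2/\crits}\mu_{\gamma,s}(\R^n)$, and concentration at any other point is excluded (when $s>0$ it would force the Hardy--Sobolev denominator to vanish along the sequence), so the strict inequality yields a minimizer. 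The only new point in this step is that $u\mapsto\int_\Omega h u^2\,dx$ is a \emph{compact} perturbation of the quadratic form: since $0\le\theta<2$, the singularity in \eqref{eq:singh} is dominated by $|x|^{-2}$ near $0$ and bounded away from it, so Hardy's inequality makes this functional weakly continuous on $H_0^1(\Omega)$.

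\emph{Case (1): $\gamma\le\frac{(n-2)^2}{4}-\frac{(2-\theta)^2}{4}$.} Fix $\chi\in C_c^\infty(\Omega)$ with $\chi\equiv1$ near $0$ and test $\mu_{\gamma,h}(\Omega)$ with $\chi U_\eps$, $U_\eps(x):=\eps^{-(n-2)/2}U(x/\eps)$. In the expansion of the Rayleigh quotient the principal terms reproduce $b(0)^{-2/\crits}\mu_{\gamma,s}(\R^n)$; the truncation produces an error of size $O(\eps^{\Bp-\Bm})$ from the tail of $U$ at infinity; the non-constancy of $b$ contributes an error of order strictly smaller than $\eps^{\Bp-\Bm}$, once the radial symmetry of $U$ is used to kill the first-order term $\nabla b(0)\cdot x$; and the perturbation contributes
\[
-\int_\Omega h\,U_\eps^2\,dx=\C_1\int_\Omega|x|^{-\theta}\log|x|\,U_\eps^2\,dx-\int_\Omega\tilde h\,U_\eps^2\,dx .
\]
Since $\int_{\R^n}|x|^{-\theta}U^2\,dx$ converges precisely when $\gamma<\frac{(n-2)^2}{4}-\frac{(2-\theta)^2}{4}$, with a logarithmic divergence at the endpoint, this last quantity is strictly \emph{negative} and of order $\eps^{2-\theta}|\log\eps|$ when $\C_1>0$, or of order $\eps^{2-\theta}$ when $\C_1=0,\ \C_2>0$ (again with a $|\log\eps|$ at the endpoint). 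The hypothesis $\gamma\le\frac{(n-2)^2}{4}-\frac{(2-\theta)^2}{4}$ is exactly equivalent to $2-\theta\le\Bp-\Bm$, which is precisely what makes this negative contribution dominate all the error terms; hence $\mu_{\gamma,h}(\Omega)<b(0)^{-2/\crits}\mu_{\gamma,s}(\R^n)$ for $\eps$ small.

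\emph{Case (2): $\frac{(n-2)^2}{4}-\frac{(2-\theta)^2}{4}<\gamma<\frac{(n-2)^2}{4}$.} Now $\Bp-\Bm<2-\theta$, so the truncation error $\eps^{\Bp-\Bm}$ is the leading correction and the bare bubble no longer suffices. Following the mass argument of \cite{Ghoussoub--Robert-remaining cases}, one replaces $\chi U_\eps$ by a test function that agrees with $U_\eps$ near $0$ and is glued, at scale $|x|\sim1$, to a multiple of the solution $H\in C^\infty(\overline\Omega\setminus\{0\})$ of $-\Delta H-\bigl(\tfrac{\gamma}{|x|^2}+h\bigr)H=0$ in $\Omega$, $H=0$ on $\partial\Omega$, normalized so that $H(x)=|x|^{-\Bp}+m_{\gamma,h}(\Omega)\,|x|^{-\Bm}+o(|x|^{-\Bm})$ as $x\to0$; the existence of $H$, its uniqueness up to scaling, and this expansion are obtained exactly as for $K_\Omega$ in Theorem \ref{hypermassI}, using coercivity together with $\gamma>\frac{(n-2)^2}{4}-\frac{(2-\theta)^2}{4}$, which is what guarantees that $h$ perturbs $H$ only at order $o(|x|^{-\Bm})$, so that $m_{\gamma,h}(\Omega)$ is well defined. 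A computation of the energy of the glued function, parallel to the one in \cite{Ghoussoub--Robert-remaining cases}, gives a Rayleigh quotient equal to $b(0)^{-2/\crits}\mu_{\gamma,s}(\R^n)-c\,m_{\gamma,h}(\Omega)\,\eps^{\Bp-\Bm}+o(\eps^{\Bp-\Bm})$ for an explicit constant $c>0$, the contributions of $h$ and of the variation of $b$ now being of strictly lower order. Hence $m_{\gamma,h}(\Omega)>0$ gives the strict inequality, and the minimizer follows from the first step.

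The step I expect to be the main obstacle is the precise energy expansion in Case (2): carrying the singular perturbation $h$ and the non-constant weight $b$ through the gluing while keeping $m_{\gamma,h}(\Omega)\,\eps^{\Bp-\Bm}$ as the genuine leading-order correction requires tracking several competing rates ($\eps^{\Bp-\Bm}$, $\eps^{2-\theta}$, $\eps^2$, and logarithmic refinements at the threshold $\gamma=\frac{(n-2)^2}{4}-\frac{(2-\theta)^2}{4}$) and exploiting the radial symmetry of $U$ throughout. The accompanying linear ingredient --- the sharp pointwise expansion of $H$ near $0$ with remainder $o(|x|^{-\Bm})$ under hypothesis \eqref{eq:singh} --- is where the sign of $\gamma-\bigl(\frac{(n-2)^2}{4}-\frac{(2-\theta)^2}{4}\bigr)$ genuinely enters, and it is the analogue for the operator $-\Delta-(\gamma/|x|^2+h)$ of the construction of $K_\Omega$ in Theorem \ref{hypermassI}.
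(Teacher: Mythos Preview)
Your proposal is correct and follows essentially the same route as the paper: reduce to the strict inequality $\mu_{\gamma,h}(\Omega)<b(0)^{-2/\crits}\mu_{\gamma,0}(\R^n)$ via the standard compactness lemma, then build the very same test functions --- $\eta U_\eps$ in Case (1) and $\eta U_\eps+\eps^{(\Bp-\Bm)/2}\beta$ (your ``gluing to $H$'') in Case (2) --- and read off the leading correction exactly as you describe. Your remark that radial symmetry of $U$ kills the $\nabla b(0)\cdot x$ contribution is a useful addition the paper leaves implicit.
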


\section{Hardy--Sobolev type inequalities in hyperbolic space}
The starting point of the study of existence of weak solutions of the above problems are  the following  inequalities which will guarantee that the above
functionals are well defined and bounded below on the right function spaces. The Sobolev inequality for hyperbolic space \cite{Sandeep--Tintarev} asserts that for $n \geq 3$, there
exists a constant $C> 0$ such that
\begin{equation*}
\left(\displaystyle\int_{\Bn} V_{2^*} \abs{u}^{2^*}\,\dv\right)^{{2}/{2^*}}\leq{C}\displaystyle\int_{\Bn}\absgradB{u}^2\, \dv \quad \text{ for all } u\in{H}^1(\Bn),
\end{equation*}
where $2^* = \frac{2n}{n-2}$ and $V_{2^*} $ is defined in (\ref{Definition-V_p}). The Hardy inequality on $\Bn$ \cite{Sandeep--Tintarev} states:
\begin{equation*}
\dfrac{(n-2)^2}{4}\displaystyle\int_{\Bn} V_2 \abs{u}^2\, \dv \leq\displaystyle\int_{\Bn}\absgradB{u}^2\,\dv \quad \text{ for all } u\in{H}^1(\Bn).
\end{equation*}
Moreover, just like the Euclidean case, $\frac{(n-2)^2}{4}$  is the best Hardy constant in the above inequality on $\Bn$, i.e.,
$$\gamma_H := \frac{(n-2)^2}{4} = \inf_{u \in H^1(\Bn) \setminus \{0\}} \dfrac{\displaystyle\displaystyle\int_{\Bn}\absgradB{u}^2\, \dv}{\displaystyle\displaystyle\int_{\Bn} V_2 \abs{u}^2\, \dv}.$$
By interpolating these inequalities via H\"older's inequality, one gets the following Hardy--Sobolev inequalities in hyperbolic space.
\begin{lem}
\label{Lemma:hyperbolic Hardy--Sobolev Inequalities}
Let $\crits=\frac{2(n-s)}{n-2}$ where $0\leq s \leq 2$. Then, there exist a positive constant $C$  such that
\begin{equation}\label{Hardy--Sobolev:interpolation}
C\left(\displaystyle\int_{\Bn} V_{\crits} \abs{u}^{\crits}\, \dv\right)^{{2}/{\crits}}\leq \displaystyle\int_{\Bn}\absgradB{u}^2\, \dv \quad \text{ for all } u\in{H}^1(\Bn).
\end{equation}
If $\gamma < \gamma_H:= \frac{(n-2)^2}{4},$ then there exists $C_{\gamma}>0$ such that
\begin{equation}\label{HardySobolev}
{C_{\gamma}}\left(\displaystyle\int_{\Bn} V_{\crits} \abs{u}^{\crits}\, \dv\right)^{{2}/{\crits}}\leq \displaystyle\int_{\Bn}\absgradB{u}^2\, \dv-\gamma\displaystyle\int_{\Bn}V_2 \abs{u}^2\, \dv \ \text{ for all } u\in{H}^1(\Bn).
\end{equation}
\end{lem}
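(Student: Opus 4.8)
The plan is to obtain both inequalities from the two endpoint estimates already recorded in this section — the Sobolev inequality, which is the case $s=0$, $\crits=2^*$, and the Hardy inequality, which is the case $s=2$, $\crits=2$ — by a single H\"older interpolation, followed by an elementary coercivity argument to pass from \eqref{Hardy--Sobolev:interpolation} to \eqref{HardySobolev}. Note first that $s\mapsto\crits=\frac{2(n-s)}{n-2}$ decreases from $2^*$ to $2$ as $s$ runs over $[0,2]$, so $\crits\in[2,2^*]$ throughout.

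The crux is an algebraic identity for the weights. Writing $V_p(r)=W(r)\,G(r)^{-(p+2)/2}$ with $W(r):=\frac{f(r)^2(1-r^2)^2}{4(n-2)^2}$ independent of $p$, and setting $\theta:=\tfrac{2-s}{2}\in[0,1]$ (so $1-\theta=\tfrac s2$), one checks by matching the exponents of $W$, of $G(r)$, and of $|u|$ that
\[
V_{\crits}\,|u|^{\crits}=\bigl(V_2\,|u|^2\bigr)^{1-\theta}\bigl(V_{2^*}\,|u|^{2^*}\bigr)^{\theta}
\qquad\text{pointwise on }\Bn.
\]
Indeed the power of $|u|$ on the right is $2(1-\theta)+2^*\theta=\crits$ by the choice of $\theta$, and with this same $\theta$ the power of $G(r)$ on the right becomes $-\tfrac{2n-2-s}{n-2}=-(\crits+2)/2$, matching the left-hand side, while the factor $W$ carries total exponent $1$. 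Integrating over $\Bn$ and applying H\"older's inequality with conjugate exponents $\tfrac{1}{1-\theta}=\tfrac2s$ and $\tfrac1\theta=\tfrac{2}{2-s}$ gives
\[
\int_{\Bn}V_{\crits}|u|^{\crits}\,\dv
\le\Bigl(\int_{\Bn}V_2|u|^2\,\dv\Bigr)^{1-\theta}\Bigl(\int_{\Bn}V_{2^*}|u|^{2^*}\,\dv\Bigr)^{\theta}.
\]
Bounding the first factor by the Hardy inequality ($\int_{\Bn}V_2|u|^2\,\dv\le\tfrac{4}{(n-2)^2}\int_{\Bn}\absgradB u^2\,\dv$) and the second by the Sobolev inequality ($\int_{\Bn}V_{2^*}|u|^{2^*}\,\dv\le C^{2^*/2}(\int_{\Bn}\absgradB u^2\,\dv)^{2^*/2}$), and noting that the resulting power of $\int_{\Bn}\absgradB u^2\,\dv$ is $(1-\theta)+\theta\tfrac{2^*}{2}=\tfrac{\crits}{2}$, we obtain \eqref{Hardy--Sobolev:interpolation} after raising to the power $2/\crits$.

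For \eqref{HardySobolev}, fix $\gamma<\gamma_H$. If $\gamma\le0$ then $-\gamma\int_{\Bn}V_2|u|^2\,\dv\ge0$, so $\int_{\Bn}\absgradB u^2\,\dv-\gamma\int_{\Bn}V_2|u|^2\,\dv\ge\int_{\Bn}\absgradB u^2\,\dv$ and \eqref{Hardy--Sobolev:interpolation} closes the argument with $C_\gamma=C$. If $0<\gamma<\gamma_H$, use the identity
\[
\int_{\Bn}\absgradB u^2\,\dv-\gamma\int_{\Bn}V_2|u|^2\,\dv
=\Bigl(1-\tfrac{\gamma}{\gamma_H}\Bigr)\int_{\Bn}\absgradB u^2\,\dv
+\tfrac{\gamma}{\gamma_H}\Bigl(\int_{\Bn}\absgradB u^2\,\dv-\gamma_H\int_{\Bn}V_2|u|^2\,\dv\Bigr),
\]
whose last bracket is nonnegative by the Hardy inequality; hence the left side is $\ge(1-\gamma/\gamma_H)\int_{\Bn}\absgradB u^2\,\dv$, and \eqref{Hardy--Sobolev:interpolation} yields \eqref{HardySobolev} with $C_\gamma=(1-\gamma/\gamma_H)\,C>0$.

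There is no genuine obstacle in this argument; the only point requiring care is the bookkeeping behind the interpolation identity for $V_{\crits}|u|^{\crits}$ — that is, recognizing that the $p$-dependence of $V_p$ is a single power of the Green's function $G$, and that this power is calibrated precisely so that a H\"older split of the nonlinear term is compatible with interpolating the weights and the gradient energy simultaneously. Everything else is a direct invocation of the Sobolev and Hardy inequalities of \cite{Sandeep--Tintarev}.
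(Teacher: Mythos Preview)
Your proof is correct and follows essentially the same approach as the paper: both arguments rest on the pointwise factorization $V_{\crits}|u|^{\crits}=(V_2|u|^2)^{s/2}(V_{2^*}|u|^{2^*})^{(2-s)/2}$, apply H\"older with conjugate exponents $2/s$ and $2/(2-s)$, and then feed in the Hardy and Sobolev inequalities to arrive at \eqref{Hardy--Sobolev:interpolation}; your coercivity step for \eqref{HardySobolev} is likewise the same lower bound $\int\absgradB u^2-\gamma\int V_2 u^2\ge(1-\gamma/\gamma_H)\int\absgradB u^2$, and your explicit treatment of the case $\gamma\le0$ is a harmless (and arguably cleaner) addition.
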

\begin{proof}
Note that for $s = 0$ (resp., $s = 2$) the first inequality is just the Sobolev (resp., the  Hardy) inequality in  hyperbolic space. We therefore have to only consider the case where $0 < s < 2$ where $\crits > 2.$ Note that
$\crits=\left(\dfrac{s}{2}\right)2+\left(\dfrac{2-s}{2}\right)2^*,$
and so
\begin{equation*}\begin{split}
V_{\crits}
&=\dfrac{f(r)^2(1-r)^2}{4(n-2)^2G(r)}\left(\dfrac{1}{\sqrt{G(r)}}\right)^{\crits}\\
&=\left(\dfrac{f(r)^2(1-r)^2}{4(n-2)^2G(r)}\right)^{\frac{s}{2}+\frac{2-s}{2}}\left(\dfrac{1}{\sqrt{G(r)}}\right)^{(\frac{s}{2})2+(\frac{2-s}{2})2^*}\\
&=\left(\dfrac{f(r)^2(1-r)^2}{4(n-2)^2G(r)}\left(\dfrac{1}{\sqrt{G(r)}}\right)^{2}\right)^{\frac{s}{2}}\left(\dfrac{f(r)^2(1-r)^2}{4(n-2)^2G(r)}\left(\dfrac{1}{\sqrt{G(r)}}\right)^{2^*}\right)^{\frac{2-s}{2}}\\
&=V_2^{\frac{s}{2}}V_{2^*}^{\frac{2-s}{2}}.
\end{split}\end{equation*}
Applying H\"older's inequality with conjugate exponents $\frac{2}{s}$ and $\frac{2}{2-s}$, we obtain
\begin{equation*}\begin{split}
\displaystyle\int_{\Bn} V_{\crits} \abs{u}^{\crits}\, \dv
&=\displaystyle\int_{\Bn}\left(\abs{u}^2\right)^{\frac{s}{2}}V_2^{\frac{s}{2}}\cdot\left(\abs{u}^{2^*}\right)^{\frac{2-s}{2}}V_{2^*}^{\frac{2-s}{2}}\,\dv\\
&\leq\left(\displaystyle\int_{\Bn} V_2 \abs{u}^2\,\dv\right)^{\frac{s}{2}}\left(\displaystyle\int_{\Bn} V_{2^*} \abs{u}^{2^*}\,\dv\right)^{\frac{2-s}{2}}\\
&\leq{C^{-1}}\left(\displaystyle\int_{\Bn}\absgradB{u}^2\,\dv\right)^{\frac{s}{2}}\left(\displaystyle\int_{\Bn}\absgradB{u}^2\, \dv \right)^{\frac{2^*}{2}\frac{2-s}{2}}\\
&=C^{-1}\left(\displaystyle\int_{\Bn}\absgradB{u}^2\,\dv\right)^{\frac{\crits}{2}}.
\end{split}\end{equation*}
It follows that for all $u \in H^1(\Bn),$
$$ \dfrac{\displaystyle\displaystyle\int_{\Bn}\absgradB{u}^2\, \dv  - \gamma \displaystyle\int_{\Bn} V_2 \abs{u}^2 \, \dv }{\displaystyle\left( \displaystyle\int_{\Bn} V_{\crits} \abs{u}^{\crits}\, \dv \right)^{{2}/{\crits}}} \ge \left(1 - \frac{\gamma}{\gamma_H} \right) \frac{\displaystyle\int_{\Bn}\absgradB{u}^2\, \dv }{\left( \displaystyle\int_{\Bn} V_{\crits} \abs{u}^{\crits}\, \dv\right)^{{2}/{\crits}}} .$$
Hence, (\ref{Hardy--Sobolev:interpolation}) implies  (\ref{HardySobolev}) whenever $\gamma < \gamma_H:= \frac{(n-2)^2}{4}.$
\end{proof}
The best constant $\mu_{\gamma}(\Bn)$ in inequality (\ref{HardySobolev}) can therefore  be written as:
\begin{equation*}
\mu_{\gamma} (\Bn)= \inf_{u \in H^1 (\Bn)\setminus \{0\}} \frac{\displaystyle\int_{\Bn}\absgradB{u}^2 \, \dv - \gamma \displaystyle\int_{\Bn} V_2 \abs{u}^2 \,\dv}{\left( \displaystyle\int_{\Bn} V_{\crits} \abs{u}^{\crits}\, \ dv_{\Bn} \right)^{{2}/{\crits}}}.
\end{equation*}
Thus,  any minimizer of $\mu_{\gamma}(\Bn)$
satisfies --up to a Lagrange multiplier-- the following Euler--Lagrange equation
\begin{equation}\label{eqgoal}
-\Delta_{\Bn}u-\gamma{V_2}u=V_{\crits}\abs{u}^{\crits-2}u,
\end{equation}
where $0\leq s<2$ and $\crits=\frac{2(n-s)}{n-2}$.

\section{The explicit solutions for Hardy--Sobolev equations on $\Bn$}

We first find the fundamental solutions associated to the Hardy--Schr\"odinger operator on $\Bn$, that is the solutions for the equation $-\Delta_{\Bn}u-\gamma{V_2}u=0$.
\begin{lem}
Assume $\gamma < \gamma_H:=\frac{(n-2)^2}{4}$. The fundamental solutions of
\begin{equation*}
-\Delta_{\Bn}u-\gamma{V_2}u=0
\end{equation*}
are given by
\begin{equation*}
u_{\pm}(r)=G(r)^{\alpha_\pm (\gamma)}\sim
\begin{cases}
\left(\dfrac{1}{n-2}r^{2-n}\right)^{\alpha_\pm(\gamma)}&\textas{r\to0},\\
\left(\dfrac{2^{n-2}}{n-1}(1-r)^{n-1}\right)^{\alpha_\pm (\gamma)}&\textas{r\to1},
\end{cases}
\end{equation*}
where
\begin{equation}\label{Relation between alpha and beta}
\alpha_\pm (\gamma) = \frac{\beta_\pm (\gamma)}{n-2}\quad \text{ and } \quad \beta_\pm (\gamma)=\dfrac{n-2}{2}\pm\sqrt{\frac{(n-2)^2}{4}-\gamma}.
\end{equation}
\end{lem}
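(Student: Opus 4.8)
The plan is to reduce the equation $-\Delta_{\Bn}u-\gamma V_2 u=0$ to an ODE in the variable $G(r)$, exploiting the radial symmetry and the fact that $G$ is (up to a constant) the fundamental solution of $\Delta_{\Bn}$. First I would recall that for a radial function $u=u(r)$ on $\Bn$, one has $\Delta_{\Bn}u = f(r)^{-1}\,\partial_r\!\left(f(r)\,\partial_r u\right)$ up to the correct normalization, where $f(r)=(1-r^2)^{n-2}/r^{n-1}$ is precisely the density appearing in \eqref{hypgreen}; equivalently, since $G'(r)=-f(r)$, the operator $\partial_r$ acting against the measure $f(r)\,dr$ is conjugate to $\partial_G$ acting against $dG$. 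Concretely, I would substitute $t=G(r)$ and write $u(r)=\phi(t)$. Then $\partial_r u = -f(r)\,\phi'(t)$, and $\partial_r(f\,\partial_r u) = \partial_r\!\left(-f^2\phi'\right)$; carrying this computation through (this is the routine part) the equation $\Delta_{\Bn}u=f^{-1}\partial_r(f\partial_r u)$ becomes, after dividing by $f^2$, an \emph{Euler-type} equation
\[
t^2\phi''(t) + \text{(lower order)} = \gamma\,\widetilde V(t)\,\phi(t),
\]
where one checks that $V_2(r)$, whose definition \eqref{Definition-V_p} is $V_2=\frac{f(r)^2(1-r^2)^2}{4(n-2)^2 G(r)^2}$, is cooked up exactly so that $V_2/f^2$ contributes a term proportional to $t^{-2}$. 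The upshot I expect is that $\phi$ solves the Euler equation $(n-2)^2\,t^2\phi'' + \text{const}\cdot t\phi' - \gamma\phi = 0$ on $(0,1)$, or equivalently that $\phi(t)=t^\alpha$ is a solution iff $(n-2)^2\alpha(\alpha-1) = -\gamma$ — but I will organize the algebra so the normalization matches.

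The second step is to solve the resulting indicial equation. Writing $\phi(t)=t^\alpha$ and plugging in, the characteristic equation is quadratic in $\alpha$; solving it yields
\[
\alpha_\pm(\gamma)=\frac12\pm\sqrt{\frac14-\frac{\gamma}{(n-2)^2}},
\]
which under the hypothesis $\gamma<\gamma_H=(n-2)^2/4$ has two real roots, and one verifies directly that $\beta_\pm(\gamma):=(n-2)\alpha_\pm(\gamma)=\frac{n-2}{2}\pm\sqrt{\frac{(n-2)^2}{4}-\gamma}$, which is relation \eqref{Relation between alpha and beta}. Since the two roots are distinct (when $\gamma<\gamma_H$ strictly), $\{G(r)^{\alpha_+},G(r)^{\alpha_-}\}$ is a fundamental system of solutions, so these are exactly the ``fundamental solutions'' claimed. (At the endpoint $\gamma=\gamma_H$ the roots coincide and one picks up a logarithmic solution, but the statement restricts to $\gamma<\gamma_H$.)

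The third step is to read off the asymptotics. Near $r=0$: from \eqref{hypgreen}, $G(r)=\int_r^1 \frac{(1-t^2)^{n-2}}{t^{n-1}}\,dt \sim \int_r^1 t^{1-n}\,dt \sim \frac{1}{n-2}r^{2-n}$ as $r\to 0$ (the correction from $(1-t^2)^{n-2}$ is higher order), hence $G(r)^{\alpha_\pm}\sim\big(\tfrac{1}{n-2}r^{2-n}\big)^{\alpha_\pm}$, which also gives $G(r)^{\alpha_\pm}\sim c(n,\gamma)\,r^{-\beta_\pm(\gamma)}$ since $\beta_\pm=(n-2)\alpha_\pm$. Near $r=1$: substituting $t=1-\tau$ and expanding $f(t)=(1-t^2)^{n-2}t^{1-n}\sim (2\tau)^{n-2}\cdot 1 = 2^{n-2}\tau^{n-2}$ as $\tau\to0$, we get $G(r)=\int_0^\tau f(1-\sigma)\,d\sigma \sim \frac{2^{n-2}}{n-1}\tau^{n-1} = \frac{2^{n-2}}{n-1}(1-r)^{n-1}$, whence $G(r)^{\alpha_\pm}\sim\big(\tfrac{2^{n-2}}{n-1}(1-r)^{n-1}\big)^{\alpha_\pm}$. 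These are exactly the two asymptotic formulas in the statement.

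The main obstacle — really the only non-cosmetic point — is getting the conjugation $r\leftrightarrow G(r)$ to land precisely on the Euler equation with the coefficients that produce $\alpha(\alpha-1)=-\gamma/(n-2)^2$; i.e., checking that the first-order term $t\phi'$ survives with the right constant (it must, since otherwise the indicial roots would not be $\tfrac12\pm\sqrt{\cdots}$) and that every factor of $(1-r^2)$, $f(r)$, and $G(r)$ in the definition of $V_2$ cancels as advertised. This is a bookkeeping computation rather than a conceptual difficulty: it is forced by the invariance property of Sandeep--Tintarev already quoted in the excerpt, which is precisely the statement that the quadratic form and the $V_2$-term are simultaneously diagonalized by the change of variables $r\mapsto G(r)$. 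I would do this computation once, carefully, and the rest follows.
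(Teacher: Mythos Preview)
Your approach is essentially the same as the paper's: substitute $\sigma=G(r)$, $v(\sigma)=u(r)$, reduce to an Euler equation, and read off the indicial roots. The paper's computation lands on $(n-2)^2v''(\sigma)+\gamma\sigma^{-2}v(\sigma)=0$ with \emph{no} first-order term; your hedging about a surviving $t\phi'$ term is the one wobble --- in fact the roots $\tfrac12\pm\sqrt{\cdots}$ force the first-order coefficient to vanish (sum of roots $=1$ requires it), so your parenthetical reasoning there is inverted, but this is cosmetic and the rest (including the asymptotics of $G$ near $0$ and $1$, which the paper does not spell out) is correct.
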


\begin{proof}
We look for solutions of the form $u(r)=G(r)^{-\alpha}$. To this end we perform a change of variable $\sigma=G(r)$, $v(\sigma)=u(r)$ to arrive at the Euler-type equation
$$(n-2)^2v''(\sigma)+\gamma\sigma^{-2}v(\sigma)=0\quad\textin(0,\infty).$$
It is easy to see that the two solutions are given by $v(\sigma)=\sigma^{\pm}$, or $u(r)=c(n,\gamma)r^{-\beta_{\pm}}$ where $\alpha_{\pm}$ and $\beta_{\pm}$ are as in \eqref{Relation between alpha and beta}.
\end{proof}

\begin{remark}
We point out that $u_{\pm}(r)\sim{c}(n,\gamma)r^{-\beta_{\pm}(\gamma)}$ as $r\to0$.
\end{remark}

\begin{prop}\label{Prop:Explicit formula on Bn}
Let $-\infty<\gamma < \frac{ (n-2)^{2}}{4}$. The equation
\begin{equation}\label{Main problem: on Bn}
-\Delta_{\Bn}u-\gamma{V_2}u=V_{\crits}u^{\crits-1}\quad\textin\Bn,
\end{equation}
has a family of positive radial solutions which are given by
\begin{align*}
U(G(r))&=c\left(G(r)^{-\frac{2-s}{n-2}\Am}+G(r)^{-\frac{2-s}{n-2}\Ap}\right)^{-\frac{n-2}{2-s}}\\
 &= c\left(G(r)^{-\frac{2-s}{(n-2)^2}\Bm}+G(r)^{-\frac{2-s}{(n-2)^2}\Bp}\right)^{-\frac{n-2}{2-s}},
\end{align*}
where $c$ is a positive constant and  $\alpha_\pm(\gamma)$ and $\beta_\pm (\gamma)$ satisfy \eqref{Relation between alpha and beta}.

\end{prop}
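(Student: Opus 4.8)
The plan is to reduce the PDE on $\Bn$ to an ODE by exploiting the radial symmetry and the change of variables $\sigma = G(r)$ already used in the previous lemma, and then to solve the resulting ODE explicitly by recognizing it as (a power of) a Euclidean Hardy--Sobolev extremal in the $\sigma$-variable. First I would substitute $u(r) = v(\sigma)$ with $\sigma = G(r)$ into equation \eqref{Main problem: on Bn}. Since $G$ is (up to a constant) the fundamental solution of $\DeltaB$, the radial part of $\DeltaB$ transforms cleanly: one checks, as in the proof of the preceding lemma, that $-\DeltaB u = -(n-2)^2\, \sigma^{?}\, v''(\sigma)$ times an appropriate Jacobian factor, and that the Hardy term $\gamma V_2 u$ becomes $\gamma (n-2)^{-2}\,(\text{same factor})\,\sigma^{-2} v(\sigma)$ — this is exactly the computation that produced the Euler equation $(n-2)^2 v'' + \gamma \sigma^{-2} v = 0$ in the homogeneous case. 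The key point is to track the weight $V_{\crits}$ through the same substitution: from the definition \eqref{Definition-V_p}, $V_{\crits}(r)$ carries precisely the factor $f(r)^2(1-r^2)^2/(4(n-2)^2)$ times $G(r)^{-\frac{\crits+2}{2}}$, and $\sigma' = G'(r) = -f(r)$, so after dividing out the common Jacobian the equation collapses to a pure ODE in $\sigma$ on $(0,\infty)$ of the form
\begin{equation*}
-(n-2)^2 v''(\sigma) - \gamma \sigma^{-2} v(\sigma) = C_s\, \sigma^{-\frac{\crits+2}{2}}\, v(\sigma)^{\crits - 1},
\end{equation*}
for an explicit constant $C_s$ absorbed eventually into the normalization constant $c$.

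Next I would solve this ODE. Writing $\crits = \frac{2(n-s)}{n-2}$, one notes $\frac{\crits+2}{2} = \frac{n-s}{n-2} + 1$ and $\crits - 1 = \frac{n-s}{n-2} + \frac{2-s}{n-2} - 1 + 1$; the cleanest route is the ansatz $v(\sigma) = c\big(\sigma^{a} + \sigma^{b}\big)^{-\frac{n-2}{2-s}}$ with exponents $a,b$ to be determined, which is the natural analogue of the Bliss/Talenti form for Hardy--Sobolev extremals. Plugging in and matching the coefficient of the linear-in-$v$ terms forces $a$ and $b$ to be the two roots of the indicial equation $(n-2)^2 t(t-1)\cdot(\text{scaling})^{\text{...}} = -\gamma$ up to the factor $\left(\frac{2-s}{n-2}\right)$, i.e. $a = -\frac{2-s}{n-2}\Am$ and $b = -\frac{2-s}{n-2}\Ap$, using $\alpha_\pm(\gamma) = \frac12 \pm \sqrt{\frac14 - \frac{\gamma}{(n-2)^2}}$; matching the coefficient of $v^{\crits-1}$ then fixes $c$. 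Substituting $\alpha_\pm = \beta_\pm/(n-2)$ from \eqref{Relation between alpha and beta} gives the second displayed form. Positivity of $U$ is immediate since it is a positive constant times a positive expression, and one should note $G(r) \to 0$ as $r \to 1$ and $G(r) \to \infty$ as $r \to 0$, so the formula is well-defined on all of $\Bn \setminus \{0\}$ and extends across $0$ in the appropriate weak sense by the asymptotics $G(r)^{\alpha_\pm} \sim c(n,\gamma) r^{-\beta_\pm}$.

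The main obstacle — really the only non-routine part — is bookkeeping: carrying the precise Jacobian factors and powers of $(1-r^2)$, $f(r)$, $G(r)$ through the change of variables without error, and verifying that every stray constant genuinely collapses into the single free constant $c$. A secondary point worth a remark is checking that the asserted solutions indeed lie in the right space (so that they are bona fide weak solutions and not merely formal ones): near $r=0$ one needs $\alpha_-(\gamma) < \frac{n-2}{2}$-type integrability, which holds precisely because $\gamma < \frac{(n-2)^2}{4}$ makes $\beta_-(\gamma) < \frac{n-2}{2}$, and near $r=1$ the factor $G(r)^{\alpha}$ vanishes fast enough. I would present the substitution and the ansatz-matching as two short lemmas or inline computations, then conclude by displaying the verified identity; alternatively, one can simply differentiate the claimed $U$ directly and verify it satisfies \eqref{Main problem: on Bn}, which sidesteps any question of how one "found" the solution but requires the same Jacobian computation in reverse.
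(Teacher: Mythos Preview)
Your approach is correct and largely parallels the paper's: both reduce \eqref{Main problem: on Bn} to the ODE $-(n-2)^2 v''(\sigma) - \gamma\sigma^{-2}v(\sigma) = \sigma^{-(\crits+2)/2}v(\sigma)^{\crits-1}$ via the substitution $\sigma=G(r)$, $v(\sigma)=u(r)$. The only difference is in how the ODE is then solved. The paper makes a \emph{second} substitution $\sigma=\tau^{2-n}$, $w(\tau)=v(\sigma)$, which transforms the ODE into precisely the radial Euclidean Hardy--Sobolev equation
\[
\tau^{1-n}(\tau^{n-1}w'(\tau))'+\gamma\tau^{-2}w(\tau)+w(\tau)^{\crits-1}=0,
\]
whose explicit extremals $w(\tau)=c\bigl(\tau^{\frac{2-s}{n-2}\Bm}+\tau^{\frac{2-s}{n-2}\Bp}\bigr)^{-\frac{n-2}{2-s}}$ are already known; you instead solve the $\sigma$-ODE directly by ansatz and coefficient-matching. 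Both routes arrive at the same formula, but the paper's is shorter (no matching needed) and, more importantly, makes transparent the structural point stressed in the Remark that follows: the hyperbolic equation \emph{is} the Euclidean one under $u(r)=w\bigl(G(r)^{-1/(n-2)}\bigr)$. Your bookkeeping worry about the stray constant $C_s$ also evaporates from this viewpoint, since the weights $V_p$ in \eqref{Definition-V_p} were engineered exactly so that $C_s=1$.
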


\begin{proof}
With the same change of variable $\sigma=G(r)$ and $v(\sigma)=u(r)$ we have
$$(n-2)^2v''(\sigma)+\gamma\sigma^{-2}v(\sigma)+\sigma^{-\frac{\crits+2}{2}}v^{\crits-1}(\sigma)=0\quad\textin(0,\infty).$$
Now, set $\sigma=\tau^{2-n}$ and $w(\tau)=v(\sigma)$
$$\tau^{1-n}(\tau^{n-1}w'(\tau))'+\gamma\tau^{-2}w(\tau)+w(\tau)^{\crits-1}=0\quad\texton(0,\infty).$$
The latter has an explicit solution $$w(\tau)=c\left(\tau^{\frac{2-s}{n-2}\Bm}+\tau^{\frac{2-s}{n-2}\Bp}\right)^{-\frac{n-2}{2-s}},$$
where $c$ is a positive constant. This translates to the explicit formula
\begin{align*}
u(r)&=c \left(G(r)^{-\frac{2-s}{n-2}\Am}+G(r)^{-\frac{2-s}{n-2}\Ap}\right)^{-\frac{n-2}{2-s}}\\
& = c \left(G(r)^{-\frac{2-s}{(n-2)^2}\Bm}+G(r)^{-\frac{2-s}{(n-2)^2}\Bp}\right)^{-\frac{n-2}{2-s}}.
\end{align*}
\end{proof}

\begin{remark}
We remark that, in the special case $\gamma=0$ and $s=0$, Sandeep--Tintarev \cite{Sandeep--Tintarev} proved that the following minimization problem
$$\mu_{0}(\Bn)=\inf_{u\in{H}^1_r(\Bn)\setminus\{0\}}\dfrac{\displaystyle\displaystyle\int_{\Bn}\absgradB{u}^2\, \dv }{\displaystyle\displaystyle\int_{\Bn} V_{2^*} \abs{u}^{2^*}\, \dv}$$
is attained.
\end{remark}

\begin{remark}
The change of variable $\sigma=G(r)$ offers a nice way of viewing the radial aspect of hyperbolic space $\Bn$ in parallel to the one in $\R^n$ in the following sense.
\begin{itemize}
\item The scaling $r\mapsto{G}^{-1}(\lambda{G}(r))$ for $r=\abs{x}$ in $\Bn$ corresponds to $\sigma\mapsto\lambda\sigma$ in $(0,\infty)$, which in turn corresponds to $\rho\mapsto\overline{\lambda}\rho=\overline{G}^{-1}(\lambda\overline{G}(\rho))$ for $\rho=\abs{x}$ in $\R^n$, once we set $\overline{G}(\rho)=\rho^{2-n}$ and $\overline{\lambda}=\lambda^{\frac{1}{2-n}}$;
\item One has a similar correspondence with the scaling-invariant equations: if $u$ solves
    \begin{gather*}
    -\Delta_{\Bn}u-\gamma{V_2}u=V_{\crits}u^{\crits-1}\quad\textin\Bn,
    \end{gather*}
    then
    \begin{enumerate}
    \item as an ODE, and once we set  $v(\sigma)=u(r)$, $\sigma= G(r)$,  it is equivalent to
    \begin{equation}
    -(n-2)^2v''(\sigma)-\gamma\sigma^{-2}v(\sigma)=\sigma^{-\frac{\crits+2}{2}}v(\sigma)^{\crits-1}\quad\texton(0,\infty);
    \end{equation}
   \item as a PDE on $\R^n$, and by setting $v(\sigma)=u(\rho)$, $\sigma=\overline{G}(\rho)$, it is in turn equivalent to
    \begin{gather*}
    -\Delta{v}-\dfrac{\gamma}{\abs{x}^2}v=\dfrac{1}{\abs{x}^s}v^{\crits-1}\quad\textin\R^n.
    \end{gather*}
    \end{enumerate}
    This also confirm that the potentials $V_{\crits}$ are the ``correct'' ones associated to the power
    $|x|^{-s}$.
\item The explicit  solution $u$ on $\Bn$  is  related to the explicit solution $w$ on $\R^n$ in the following way:
$$u(r)=w\left(G(r)^{-\frac{1}{n-2}}\right).$$
\item Under the above setting, it is also easy to see the following integral identities:
    \[
    \begin{split}
    \displaystyle\int_{\Bn}\absgradB{u}^2\,dv_{g_\Bn}
    &=\displaystyle\int_{0}^{\infty}v'(\sigma)^2\,d\sigma\\
    \displaystyle\int_{\Bn}V_2{u}^2\,dv_{g_\Bn}
    &=\dfrac{1}{(n-2)^2}\displaystyle\int_{0}^{\infty}\dfrac{v^2(\sigma)}{\sigma^2}\,d\sigma\\
    \displaystyle\int_{\Bn}V_p{u}^p\,dv_{g_\Bn}
    &=\dfrac{1}{(n-2)^2}\displaystyle\int_{0}^{\infty}\dfrac{v^p(\sigma)}{\sigma^{\frac{p+2}{2}}}\,d\sigma,
    \end{split}
    \]
    which, in the same way as above, equal to the corresponding Euclidean integrals.
\end{itemize}
\end{remark}
\section{The corresponding perturbed Hardy--Schr\"odinger operator on Euclidean space}

We shall see in the next section that after a conformal transformation, 
the equation  \eqref{main.eq} 
is transformed into the Euclidean equation
\begin{equation} \label{HSeqn2}
\left\{ \begin{array}{@{}l@{\;}l@{\;}ll}
-\Delta u- \left( \frac{\gamma}{|x|^2}+h(x) \right) u&=&b(x)\frac{u^{\crits-1}}{|x|^s}  \ \ &\text{ in } \Omega,\\
\hfill u&>&0 &\text{ in } \Omega, \\
\hfill u&=&0 &\text{ on }\partial \Omega,
\end{array} \right.
\end{equation}
where $\Omega$ is a bounded domain in $\R^{n}$, $n \geq 3$, $h\in C^1(\overline{\Omega} \setminus \{0\})$ with $\lim\limits_{|x|\to0}|x|^{2}h(x)=0$ is such that the operator $-\Delta-\left(\frac{\gamma}{|x|^{2}}+h(x)\right)$  is coercive and $b(x)\in C^{1}(\overline{\Omega})$ is non-negative with $b(0) > 0$. The equation \eqref{HSeqn2} is the Euler--Lagrange equation for following energy functional on $D^{1,2}(\Omega)$,
$$J^{\Omega}_{\gamma, h}(u) :=\frac{\displaystyle\int _{\Omega} \left(\absgrad{u}^2 - \left( \frac{\gamma}{|x|^{2}}+ h(x) \right)  u^{2}\right) dx}{\left(~\displaystyle\int _{\Omega}  b(x) \frac{|u|^{\crits}}{|x|^{s}} \,dx\right)^{{2}/{\crits}}}.$$
Here $D^{1,2}(\Omega)$  -- or $H^1_0(\Omega)$ if the domain is bounded -- is the completion of $C^{\infty}_{c}(\Omega)$ with respect to the norm given by $||u||^{2}= \int \limits_{\Omega} |\nabla u|^{2}~dx$.
We let
$$\mu_{\gamma,h}(\Omega) : =\inf_{u\in{D^{1,2}}(\Omega)\setminus\set{0}}J^{\Omega}_{\gamma, h }(u)$$
A standard approach to find minimizers is to compare $\mu_{\gamma,  h}(\Omega)$ with $\mu_{\gamma,0}(\R^{n})$.  It is know that $\mu_{\gamma,0}(\R^{n})$ is attained when  $ \gamma \geq 0$,  are  explicit and take the form
\begin{equation*}\label{eq:Ue1}
U_{\eps}(x):=c_{\gamma,s}(n) \cdot \eps^{-\frac{n-2}{2}}U\left(\frac{x}{\eps}\right)=c_{\gamma,s}(n) \cdot \left(\frac{\eps^{\frac{2-s}{n-2}\cdot\frac{\Bp-\Bm}{2}}}{\eps^{\frac{2-s}{n-2}\cdot(\Bp-\Bm)}|x|^{\frac{(2-s)\Bm}{n-2}}+ |x|^{\frac{(2-s)\Bp}{n-2}}}\right)^{\frac{n-2}{2-s}}\quad 
\end{equation*}
for $x\in\R^n\setminus\{0\}$,
where $\eps>0$, $c_{\gamma,s}(n) > 0$,  and $\beta_{\pm}(\gamma)$ are defined in  \eqref{Relation between alpha and beta}. In particular, there exists $\chi>0$ such that
\begin{equation}\label{eq:U1}
-\Delta U_{\eps}-\frac{\gamma}{|x|^2}U_{\eps}=\chi \frac{U_{\eps}^{\crits-1}}{|x|^s}\hbox{ in }\R^{n}\setminus\{0\}.
\end{equation}
\bigskip

We shall start by analyzing the singular solutions and then
define the mass of a domain associated to the operator $ - \Delta - \left(\frac{\gamma }{|x|^2}+h(x)\right)$.

\begin{prop}
\label{Propoaition -Regularity}
Let $\Omega$ be a smooth bounded domain in $\R^n$  such that $0 \in \Omega$ and $\gamma <\frac{(n-2)^2 }{4}$. Let  $\displaystyle  h \in C^1(\overline{\Omega} \setminus \{0\})$ be such that $\lim\limits_{|x|\to0}|x|^{\tau}h(x)$ exists and is finite, for some $0\le \tau <2$, and that
the operator $ - \Delta - \frac{\gamma }{|x|^2}-h(x)$ is coercive. Then
\begin{enumerate}
\item There exists a solution $K \in C^\infty(\overline{\Omega} \setminus \{0\})$ for the linear problem
\begin{equation}\label{singsol}
\left\{\begin{array}{ll}
-\Delta K-\left(\frac{\gamma}{|x|^2}+h(x)\right)K=0 &\hbox{ in }\Omega\setminus\{0\}\\
\hfill K>0 &\hbox{ in }\Omega\setminus\{0\}\\
\hfill K=0 &\hbox{ on }\partial\Omega,
\end{array}\right.
\end{equation}
such that for some $c > 0$,
\begin{equation} K(x) \simeq_{x \to 0} \frac{c}{|x|^{\Bp}}.
\end{equation}
Moreover, if $K' \in C^\infty(\overline{\Omega} \setminus \{0\})$ is another solution for the above equation, then there exists $\lambda > 0$ such that $K' =\lambda K.$ \\
\item     Let   $\theta = \inf \{  \theta'\in[0,2): \lim\limits_{|x|\to0}|x|^{\theta'}h(x) \hbox{ exists and is finite}\}$. If $\gamma > \frac{(n-2)^2 }{4} - \frac{(2-\theta)^2}{4}$, then
there exists $c_1, c_2 \in \R  $ with $c_1 > 0$ such that
\begin{equation}\label{def:mass}
K(x) =\frac{c_1}{|x|^{\Bp}} +  \frac{c_2}{|x|^{\Bm}} +o \left(\frac{1}{|x|^{\Bm}}\right) \quad \text{ as } x \to 0.
\end{equation}
The ratio $\frac{c_2}{c_1}$ is independent of the choice of $K.$ We can therefore define the mass of  $\Omega$ with respect to the operator $ - \Delta - \left( \frac{\gamma}{|x|^2} + h(x) \right)$ as  $m_{\gamma,h}(\Omega):= \displaystyle \frac{c_2}{c_1}.$\\
\item
The mass $m_{\gamma,h}(\Omega)$ satisfies the following properties:
\begin{itemize}
\item
$m_{\gamma,0}(\Omega)<0$,
\item
If $h \leq h'$  and $h\not \equiv h'$,  then $m_{\gamma,h}(\Omega)<m_{\gamma,h'}(\Omega)$,
\item
If $\Omega' \subset \Omega$, then   $m_{\gamma,h}(\Omega')<m_{\gamma,h}(\Omega)$.
\end{itemize}
\end{enumerate}
\end{prop}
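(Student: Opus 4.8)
The plan is to follow the template of Ghoussoub--Robert \cite{Ghoussoub--Robert-remaining cases}, the one genuinely new feature being that $h$ may be singular of order strictly below $2$ and of indefinite sign; what has to be checked is that this does not perturb the indicial behaviour at $0$, which is governed by the roots $-\Bp,-\Bm$ of $-\Delta-\gamma|x|^{-2}$. For part (1), I would pass to Emden--Fowler coordinates $x=r\omega$, $u(x)=r^{-(n-2)/2}v(-\log r,\omega)$, which conjugates $-\Delta-\gamma|x|^{-2}$ to $-\partial_t^2-\Delta_{S^{n-1}}+\bigl(\tfrac{(n-2)^2}{4}-\gamma\bigr)$ on a half-cylinder, the perturbation $h$ contributing a term decaying exponentially in $t$ since $|x|^\tau h(x)=O(1)$ with $\tau<2$. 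A positive solution $K$ of \eqref{singsol} with $K\asymp|x|^{-\Bp}$ near $0$ I would obtain by sub/supersolutions: for $0<\eps_0<2-\tau$ and $A$ large, $r^{-\Bp}(1\pm A r^{\eps_0})$ are ordered sub- and supersolutions on a small ball, because $(-\Delta-\gamma|x|^{-2})(r^{-\Bp+\eps_0})$ is a \emph{positive} multiple of $r^{-\Bp+\eps_0-2}$, which dominates $|h|\,r^{-\Bp}=O(r^{-\Bp-\tau})$ near $0$; solving the Dirichlet problem on $\Omega\setminus\overline{B_\eps}$ with datum $r^{-\Bp}$ on $\partial B_\eps$ and $0$ on $\partial\Omega$ and letting $\eps\to0$ along a subsequence (the $K_\eps$ being controlled uniformly by these $\eps$-independent barriers) produces $K$, and interior elliptic regularity gives $K\in C^\infty(\overline\Omega\setminus\{0\})$. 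The \emph{exact} leading constant $\lim_{r\to0}r^{\Bp}K(r\omega)=c>0$ I would read off the reduced ODE: with $w=e^{-at}v$, $a=\sqrt{(n-2)^2/4-\gamma}$, $w$ is bounded above and below and solves an exponentially small perturbation of a problem whose $t$-independent kernel on $S^{n-1}$ is spanned by the constants, so $w(t,\cdot)\to c$ as $t\to\infty$. For uniqueness up to a multiplicative constant, given another solution $K'$ I would choose $\lambda$ so that $w:=K'-\lambda K=o(|x|^{-\Bp})$ near $0$; then $w$ is $L$-harmonic and vanishes on $\partial\Omega$, and Green's identity on $\Omega\setminus B_\eps$ --- using that the Green function $\mathcal G$ of the coercive operator satisfies $\mathcal G(x,\cdot)=O(|\cdot|^{-\Bm})$ and $\nabla_y\mathcal G(x,\cdot)=O(|\cdot|^{-\Bm-1})$ near $0$ (the $|y|^{-\Bp}$ mode being excluded because $\mathcal G(x,\cdot)\in H^1$ near $0$ while $\Bp\ge(n-2)/2$) --- makes the inner boundary integrals vanish, as $\Bp+\Bm=n-2$, so $w\equiv0$.

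For part (2), it suffices to peel off a single term. Writing $K=c\,|x|^{-\Bp}+\rho$, the remainder solves $(-\Delta-\gamma|x|^{-2}-h)\rho=c\,h\,|x|^{-\Bp}=O(|x|^{-\Bp-\theta-\delta})$ for every $\delta>0$, so a particular solution is $O(|x|^{-\Bp+2-\theta-\delta})$; the hypothesis $\gamma>\tfrac{(n-2)^2}{4}-\tfrac{(2-\theta)^2}{4}$ is precisely $\Bp-\Bm<2-\theta$, i.e.\ this correction is $o(|x|^{-\Bm})$ once $\delta$ is small. Since the only homogeneous modes of $-\Delta-\gamma|x|^{-2}$ whose indicial exponent lies in $[-\Bp,-\Bm]$ are $|x|^{-\Bp}$ and $|x|^{-\Bm}$ --- spherical harmonics of degree $\ell\ge1$ give exponents $-\beta_+^{(\ell)}<-\Bp$, which are absent since $K\asymp|x|^{-\Bp}$, and $-\beta_-^{(\ell)}>-\Bm$, which contribute only $o(|x|^{-\Bm})$ --- one obtains $K(x)=c_1|x|^{-\Bp}+c_2|x|^{-\Bm}+o(|x|^{-\Bm})$ with $c_1=c>0$. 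That $c_2/c_1$ does not depend on $K$ is immediate from part (1): $K'=\lambda K$ forces $(c_1',c_2')=\lambda(c_1,c_2)$.

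For part (3), the three statements share one mechanism: one produces an $L$-harmonic function (or a supersolution) $w$ whose $|x|^{-\Bp}$ term has been cancelled, deduces $w\ge0$ by testing against $w^-\in H^1_0$ (legitimate because $|x|^{-\Bm}\in H^1_\loc$, as $\Bm<(n-2)/2$, and $\{0\}$ has zero $H^1$-capacity), upgrades to $w>0$ by the strong maximum principle, and then rules out that the $|x|^{-\Bm}$-coefficient of $w$ vanishes. For $m_{\gamma,0}(\Omega)$: take $h=0$, normalize $c_1=1$, and set $w=|x|^{-\Bp}-K$, which is $L$-harmonic, equals $|x|^{-\Bp}>0$ on $\partial\Omega$, and satisfies $w=-m_{\gamma,0}(\Omega)|x|^{-\Bm}+o(|x|^{-\Bm})$ at $0$; hence $m_{\gamma,0}(\Omega)\le0$, and if it were $0$ the radial average $\bar w(r)=\alpha r^{-\Bp}+\beta r^{-\Bm}$ would be $o(r^{-\Bm})$, forcing $\alpha=\beta=0$ and $\bar w\equiv0$ near $0$, contradicting $w>0$. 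For $h\le h'$, $h\not\equiv h'$: put $w=K_{h'}-K_h$ with leading coefficients normalized; then $L_{h'}w=(h'-h)K_h\ge0$ and $\not\equiv0$, $w=0$ on $\partial\Omega$, so $w>0$, and if $m_{\gamma,h'}(\Omega)=m_{\gamma,h}(\Omega)$ then $w=o(|x|^{-\Bm})$, whence the representation $w(x)=\int_\Omega\mathcal G_{h'}(x,y)(h'-h)(y)K_h(y)\,dy$ together with $|x|^{\Bm}\mathcal G_{h'}(x,y)\to A(y)>0$ as $x\to0$ gives $\lim_{x\to0}|x|^{\Bm}w(x)>0$, a contradiction. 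For $\Omega'\subset\Omega$: apply the same mechanism to $w=K_\Omega-K_{\Omega'}$ on $\Omega'$ (where $w=K_\Omega>0$ on $\partial\Omega'$), using that $x\mapsto\lim_{z\to0}|z|^{\Bm}\mathcal G_{\Omega'}(z,x)$ is $L$-harmonic, positive, vanishes on $\partial\Omega'$ and is $\asymp|x|^{-\Bp}$ at $0$ --- hence a positive multiple of $K_{\Omega'}$ --- so that its inner normal derivative on $\partial\Omega'$ is strictly negative by Hopf's lemma.

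The heart of the matter is part (1): constructing $K$ with the precise leading term $c\,|x|^{-\Bp}$ and proving uniqueness up to scaling both rely on sharp asymptotics at $0$ --- the expansion of $K$ with a controlled remainder, and the matching estimate $\mathcal G(x,y)=O(|y|^{-\Bm})$ on the Green function as $y\to0$ --- now carried out in the presence of the singular, possibly non-radial and sign-changing perturbation $h$. Granting these, part (2) is a single bootstrap step, the non-strict inequalities in part (3) are pure coercivity, and the strict ones need only the additional fact that $\lim_{z\to0}|z|^{\Bm}\mathcal G(z,\cdot)$ is, up to a constant, the singular solution $K$.
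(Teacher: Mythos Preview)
Your approach is viable and broadly correct, but it is genuinely different from the paper's. The paper defers parts (1) and (3) to \cite{Ghoussoub--Robert-remaining cases} with the remark that only minor changes are needed to accommodate the singular $h$, and concentrates its effort on part (2), where the argument is PDE/variational rather than the ODE/indicial analysis you outline. Concretely, the paper writes $K=\eta(x)|x|^{-\Bp}+f$ with a cut-off $\eta$ and observes that the source term $g:=-L(\eta|x|^{-\Bp})$ obeys $|g(x)|\le C|x|^{-(\Bp+\theta')}$ for any $\theta'>\theta$; the condition $\gamma>\tfrac{(n-2)^2}{4}-\tfrac{(2-\theta)^2}{4}$ is used precisely to place $g\in L^{2n/(n+2)}(\Omega)\subset H^{-1}$, so coercivity yields $f\in H_0^1(\Omega)$. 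The asymptotic $|x|^{\Bm}f(x)\to c_2$ is then obtained by splitting $g=g_+-g_-$, bounding each $f_i$ from below by an explicit subsolution $|x|^{-\Bm}+\mu|x|^{-\alpha}$, rewriting the equation for $f_i$ as $-\Delta f_i-\bigl(\gamma+O(|x|^{(2-\theta')-(\Bp-\Bm)})\bigr)|x|^{-2}f_i=0$, and invoking the sharp regularity theorem of \cite[Theorem 8]{Ghoussoub--Robert-remaining cases}.

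Your route via Emden--Fowler coordinates and indicial-root analysis is natural and would work, but the step you compress into one sentence---``a particular solution is $O(|x|^{-\Bp+2-\theta-\delta})$''---is exactly the content the paper spends its entire proof on, and in the PDE setting (non-radial, sign-changing $h$) it is not free: one either constructs that particular solution and controls it, or, as the paper does, one finds $f$ by coercivity and then proves its precise rate by comparison and the optimal regularity theorem. Similarly, the Green-function asymptotics $\mathcal G(x,y)=O(|y|^{-\Bm})$ and $|x|^{\Bm}\mathcal G(x,y)\to A(y)>0$ that you invoke for uniqueness and for the strictness in part (3) are essentially dual to the expansion \eqref{def:mass} and require the same machinery; the paper avoids stating them separately by working directly with $K$ and deferring to \cite{Ghoussoub--Robert-remaining cases}. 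In short: your plan is a reasonable alternative, but what the paper buys with its approach is that the one hard step---the $|x|^{-\Bm}$ asymptotic in the presence of singular $h$---is reduced to a clean application of an existing optimal-regularity result, whereas your outline still owes the proof of that step.
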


 \begin{proof}
The proof of (1) and (3) is similar to Proposition 2 and 4 in \cite{Ghoussoub--Robert-remaining cases} with only a minor change 
that accounts for the singularity of $h$. To illustrate the role of this extra singularity we prove (2). For that, we let $\eta \in C_c^\infty(\Omega)$ be such that $\eta(x) \equiv 1$ around $0.$ Our first objective is to write $K(x) := \frac{\eta(x)}{|x|^{\Bp}} + f(x)$ for some $f \in H_0^1(\Omega).$
Note that $  \gamma >  \frac{(n-2)^2}{4}- \frac{(2-\theta)^2}{4} \iff \beta_{+}-\beta_{-} < 2-\theta \iff 2 \beta_{+} < n-\theta$. Fix $\theta'$ such that $\theta< \theta'< \min \left\{\frac{2+\theta}{2}, 2-(\Bp-\Bm) \right\}$. Then $\lim\limits_{|x|\to0}|x|^{\theta'}h(x)$ exists and is finite.
\medskip

Consider the function
$$g(x) = - \left( - \Delta - \left( \frac{\gamma}{|x|^2} +h(x) \right) \right) (\eta |x|^{-\Bp}) \quad \text{ in } \Omega \setminus \{0\}. $$
Since $\eta(x) \equiv 1$ around $0$, we have that
\begin{equation}\label{Upper bound for g(x)-Mass}
|g(x)| \le  \left| \frac{h(x)}{|x|^{\Bp}} \right| \le C |x|^{-\left(\Bp+\theta' \right)} \quad \text{ as } x \to 0.
\end{equation}
Therefore $g \in L^{\frac{2n}{n+2}}(\Omega)$  if   $2\Bp +2 \theta'< n+2$, and  this holds since by our assumption $2 \beta_{+} < n-\theta$ and $2\theta' < 2+\theta$.  Since $L^{\frac{2n}{n+2}}(\Omega) = L^{\frac{2n}{n-2}}(\Omega)' \subset  H^1_0(\Omega)' $, there exists $f \in  H^1_0(\Omega)$ such that
$$- \Delta f - \left( \frac{\gamma}{|x|^2} +h(x) \right) f = g  \qquad \text{ in } H^1_0(\Omega).$$
By regularity theory, we have that $f \in C^2(\overline{\Omega} \setminus \{0\})$. We now show that
\begin{equation}\label{Existence of finite limit}
|x|^{\Bm} f(x)  \text{ has a finite limit as } x \to 0.
\end{equation}
Define
$ K(x) = \frac{\eta(x)}{|x|^{\Bp}} + f(x) \text{ for all } x \in \overline{\Omega} \setminus \{0\},$
and note that $K \in C^2(\overline{\Omega} \setminus \{0\})$ and is a solution to
$$- \Delta K - \left( \frac{\gamma}{|x|^2}+h(x) \right) K =0.$$
Write $g_+(x) := \max \{g(x), 0\}$ and $g_-(x) := \max \{-g(x), 0\}$ so that $g = g_+ - g_-,$ and let $f_1, f_2 \in H_0^1(\Omega)$ be weak solutions to
\begin{equation}\label{Solution to linear problem: f_1 - f_2}
- \Delta f_1 - \left( \frac{\gamma}{|x|^2} +h(x) \right) f_1 = g_+ ~\hbox{ and } - \Delta f_2 - \left( \frac{\gamma}{|x|^2} +h(x) \right) f_2 = g_-   \hbox{ in } H^1_0(\Omega).
\end{equation}
In particular, uniqueness, coercivity and the maximum principle yields $f = f_1 - f_2$ and $f_1,f_2 \ge 0.$ Assume that $f_1 \not\equiv 0$ so that $f_1 > 0 $ in $\Omega \setminus \{0\},$ fix $\alpha >   \Bp$ and $ \mu >0$. Define $u_-(x): = |x|^{-\Bm} + \mu |x|^{-\alpha}$ for all $x \neq 0$.  We then get that  there exists a small $\delta > 0$ such that
\begin{align}\label{Sub-Solution u_-}
\begin{split}
\left( - \Delta  - \left( \frac{\gamma}{|x|^2} +h(x) \right) \right)u_-(x)
&=   \mu \left(-\Delta -\frac{\gamma}{|x|^{2}} \right)|x|^{-\alpha} -\mu h(x)|x|^{-\alpha} -h(x)|x|^{-\Bm}\\
&=\frac{-\mu \left( \alpha -\Bp\right)\left(\alpha-\Bm \right)-|x|^{2}h(x)\left( |x|^{\alpha-\Bm} +\mu\right)}{|x|^{\alpha +2}}\\
& <0 ~\hbox{ for } x \in B_\delta(0) \setminus \{0\},
\end{split}
\end{align}
This implies that $u_-(x)$ is a sub-solution on $B_\delta(0) \setminus \{0\}$.  Let  $C > 0$  be such that $f_1 \ge C u_-$ on $\partial B_\delta(0)$. Since $f_1$ and $C u_- \in H^1_0(\Omega)$ are respectively super-solutions
and sub-solutions to $\left( - \Delta  - \left( \frac{\gamma}{|x|^2} +h(x) \right) \right)u(x) =0,$  it follows from the comparison principle (via coercivity) that  $f_1 > C u_- > C |x|^{-\Bm}$ on $ B_\delta(0) \setminus \{0\}$. It  then follows from  (\ref{Upper bound for g(x)-Mass}) that
\begin{align*}
g_+(x) \le  |g(x)| \le C |x|^{-\left(\Bp+\theta' \right)}
\le C_1 |x|^{ (2-\theta')-(\Bp-\Bm )}  \frac{f_1}{|x|^{2}}.
\end{align*}
Then rewriting
 \eqref{Solution to linear problem: f_1 - f_2}  as
$$- \Delta f_1 - \left( \frac{\gamma}{|x|^2} +h(x)+\frac{g_+}{f_{1}} \right) f_1 = 0
$$
yields
$$ - \Delta f_1  - \left( \frac{\gamma +  O\left(|x|^{ (2-\theta') - (\Bp-\Bm)} \right) }{|x|^2} \right) f_1  =0.$$
With our choice of $\theta'$ we can then conclude by the optimal regularity result  in  \cite[Theorem 8]{Ghoussoub--Robert-remaining cases}  that $|x|^{\Bm} f_1 $ has a finite limit as $x \to 0.$ Similarly one also  obtains that  $|x|^{\Bm} f_2$   has a finite limit as $x \to 0,$  and therefore (\ref{Existence of finite limit}) is verified.

It follows that there exists $c_2 \in \R $ such that
\begin{equation*}
K(x) =\frac{1}{|x|^{\Bp}} +  \frac{c_2}{|x|^{\Bm}} +o\left(\frac{1}{|x|^{\Bm}}\right) \quad \text{ as } x \to 0,
\end{equation*}
which proves the existence of a solution $K$ to the problem with the relevant asymptotic behavior. The uniqueness result 
yields the conclusion.
\end{proof}
\medskip

%

We now proceed with the proof of the  existence results, following again
 \cite{Ghoussoub--Robert-remaining cases}. We shall use the following standard sufficient condition for attainability.

\noindent

\begin{lemma}\label{low energy compactness} Under the assumptions of Theorem \ref{Thm:minexist:Euclidean}, if
\begin{align*}
\mu_{\gamma,h}(\Omega):= \inf_{u \in{H_{0}^1(\Omega})\setminus\set{0}} \frac{\displaystyle\int _{\Omega} \left(\absgrad{u}^2 -\left( \frac{\gamma}{|x|^{2}}+h(x) \right) u^{2}\right) dx}{\left(~\displaystyle\int _{\Omega}  b(x) \frac{|u|^{\crits}}{|x|^{s}} \,dx\right)^{{2}/{\crits}}} <\frac{ \mu_{\gamma,0}(\R^{n})}{b(0)^{2/\crits}},
\end{align*}
then the infimum $\mu_{\gamma,s}(\Omega)$ is achieved and equation \eqref{HSeqn2} has a solution.
\end{lemma}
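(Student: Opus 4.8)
The plan is to follow the classical concentration-compactness / test-function strategy that goes back to Brezis--Nirenberg and was adapted to the Hardy--Sobolev setting by Ghoussoub--Robert. Let me sketch how I would prove Lemma~\ref{low energy compactness}.

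\medskip

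\noindent\textbf{Step 1: Subcriticality and a minimizing sequence.} First I would fix $q<\crits$ and consider the subcritical functionals $J_q^\Omega(u)$ obtained by replacing $\crits$ by $q$ in the denominator of $J^\Omega_{\gamma,h}$, or (equivalently) work directly with a minimizing sequence $(u_k)$ for $\mu_{\gamma,h}(\Omega)$. By homogeneity I normalize $\int_\Omega b(x)|u_k|^{\crits}|x|^{-s}\,dx=1$ and $u_k\geq0$ (replacing $u_k$ by $|u_k|$ only decreases the numerator). Coercivity of $-\Delta-(\gamma|x|^{-2}+h(x))$ gives that $\|u_k\|_{H^1_0(\Omega)}$ is bounded, so up to a subsequence $u_k\rightharpoonup u$ weakly in $H^1_0(\Omega)$, strongly in $L^2_{\loc}$, and a.e.

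\medskip

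\noindent\textbf{Step 2: Blow-up analysis / the dichotomy.} The key point is to rule out vanishing of the weak limit. Set $v_k=u_k-u$. Using the Brezis--Lieb lemma for the weighted nonlinearity $\int b(x)|w|^{\crits}|x|^{-s}$ (valid since $b\in C^1(\overline\Omega)$ is bounded and $|x|^{-s}$ is the Hardy--Sobolev weight) together with the fact that the lower-order terms $\int(\gamma|x|^{-2}+h(x))u_k^2$ split as $\int(\gamma|x|^{-2}+h(x))u^2 + \int(\gamma|x|^{-2}+h(x))v_k^2 + o(1)$ — here the singular term $|x|^{-2}u^2$ is handled by the Hardy inequality plus local strong convergence, and $h(x)u^2$ with $h$ as in \eqref{eq:singh} is handled since $|x|^{2-\theta}h$ is bounded and $u\in H^1_0$ gives $\int |x|^{-\theta}u^2<\infty$ — one obtains the energy and volume splittings
\begin{equation*}
\mu_{\gamma,h}(\Omega)+o(1) = Q(u) + Q_0(v_k) + o(1), \qquad 1 = \textstyle\int b|u|^{\crits}|x|^{-s} + \int b|v_k|^{\crits}|x|^{-s} + o(1),
\end{equation*}
where $Q$ is the full quadratic form and $Q_0$ the one with only the $\gamma|x|^{-2}$ term (the perturbation $h$ and the domain restriction disappear after rescaling/localizing the residue). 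Writing $\lambda_1=\int b|u|^{\crits}|x|^{-s}$, $\lambda_2=1-\lambda_1$, one estimates $Q(u)\geq \mu_{\gamma,h}(\Omega)\lambda_1^{2/\crits}$ and, after passing to the limiting problem on $\R^n$ and using $b(x)\geq b(0)-o(1)$ near the concentration point, $Q_0(v_k)\geq \mu_{\gamma,0}(\R^n) b(0)^{-2/\crits}\lambda_2^{2/\crits}+o(1)$. Since $t\mapsto t^{2/\crits}$ is strictly concave and $\crits>2$, the strict inequality hypothesis $\mu_{\gamma,h}(\Omega)<\mu_{\gamma,0}(\R^n)/b(0)^{2/\crits}$ forces $\lambda_2=0$, i.e.\ $v_k\to0$ strongly in the weighted $L^{\crits}$ norm, hence $u\not\equiv0$ is a minimizer.

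\medskip

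\noindent\textbf{Step 3: From minimizer to solution.} Once $u$ attains $\mu_{\gamma,h}(\Omega)$, the Euler--Lagrange equation gives, up to a Lagrange multiplier and after rescaling, a weak solution of \eqref{HSeqn2}; positivity follows from replacing $u$ by $|u|$ and the strong maximum principle (via coercivity), and smoothness away from $0$ from elliptic regularity. I would expect \textbf{Step 2 — the splitting of the singular and the $h$-perturbed lower-order terms and the precise accounting of where the mass concentrates} — to be the main obstacle: one must verify that the only possible loss of compactness is a single bubble at the origin (the singular point of both $|x|^{-2}$ and $|x|^{-s}$), that no mass escapes to the boundary or to interior regular points (excluded by the standard Sobolev inequality being strictly better than the Hardy--Sobolev one there), and that the rescaled residue $v_k$ genuinely converges to the limiting problem on all of $\R^n$ with constant coefficient $b(0)$, so that the sharp constant $\mu_{\gamma,0}(\R^n)$ appears. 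All of this is carried out in detail in \cite{Ghoussoub--Robert-remaining cases}; here the only new feature is the mild singularity of $h$ allowed by \eqref{eq:singh}, which is absorbed exactly because $\theta<2$ keeps $\int|x|^{-\theta}u^2$ finite for $u\in H^1_0(\Omega)$.
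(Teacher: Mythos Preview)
The paper does not actually prove this lemma: it is stated as a ``standard sufficient condition for attainability'' and the text moves directly to the proof of Theorem~\ref{Thm:minexist:Euclidean}, implicitly deferring to \cite{Ghoussoub--Robert-remaining cases} for the argument. Your concentration--compactness sketch is precisely the approach one finds there, and it is correct. In particular you have identified the only new feature, namely that the perturbation $h$ may be singular as in \eqref{eq:singh}; this is absorbed because for any $\theta<\theta'<2$ the embedding $H^1_0(\Omega)\hookrightarrow L^2(|x|^{-\theta'}dx)$ is compact, so $\int_\Omega h(x)v_k^2\,dx\to0$ and the residual quadratic form reduces to $Q_0$ as you claim.

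One small caveat: your remark that concentration at an interior point $x_0\neq0$ is ``excluded by the standard Sobolev inequality being strictly better than the Hardy--Sobolev one there'' is immediate only when $s>0$, since then $\crits<2^*$ and the embedding $H^1_0\hookrightarrow L^{\crits}$ is locally compact away from the origin. In the borderline case $s=0$ one must also compare the bubble energy $S/b(x_0)^{2/2^*}$ at a generic point with the threshold $\mu_{\gamma,0}(\R^n)/b(0)^{2/2^*}$, which brings in the behaviour of $b$; this is not a difficulty in the paper's application (where $\nabla b(0)=0$ and the relevant estimates localize at the origin anyway), but it is worth flagging since the lemma is stated for general $b$.
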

\noindent {\bf Proof of Theorem \ref{Thm:minexist:Euclidean}:}
We will  construct a minimizing sequence $u_\eps$ in $ H^1_0(\Omega)\setminus\{0\}$ for the functional $J^{\Omega}_{\gamma,  h}$ in such a way that $\mu_{\gamma,  h}(\Omega)<b(0)^{-2/\crits}\mu_{\gamma,0}(\R^{n})$. As mentioned above, when  $ \gamma \geq 0$
the infimum $\mu_{\gamma,0}(\R^{n})$ is achieved, up to a constant,  by
the function
$$U(x):=\frac{1}{\left(|x|^{\frac{(2-s)\Bm}{n-2}}+|x|^{\frac{(2-s)\Bp}{n-2}}\right)^{\frac{n-2}{2-s}}}
\hbox{ for }x\in\R^{n}\setminus\{0\}.$$
In particular, there exists $\chi>0$ such that
\begin{equation}\label{eq:U2}
-\Delta U-\frac{\gamma}{|x|^2}U=\chi \frac{U^{\crits-1}}{|x|^s}\hbox{ in }\R^{n}\setminus\{0\}.
\end{equation}
Define a scaled version of $U$ by
\begin{equation}\label{eq:Ue2}
U_{\eps}(x):=\eps^{-\frac{n-2}{2}}U\left(\frac{x}{\eps}\right)=\left(\frac{\eps^{\frac{2-s}{n-2}\cdot\frac{\Bp-\Bm}{2}}}{\eps^{\frac{2-s}{n-2}\cdot(\Bp-\Bm)}|x|^{\frac{(2-s)\Bm}{n-2}}+ |x|^{\frac{(2-s)\Bp}{n-2}}}\right)^{\frac{n-2}{2-s}}\quad \hbox{for $x\in\R^{n} \setminus\{0\}$.}
\end{equation}
$\beta_{\pm}(\gamma)$ are defined in  \eqref{Relation between alpha and beta}. In the sequel, we write $\beta_+ := \Bp$ and $\beta_-:= \Bm.$
Consider a cut-off function $\eta\in C^\infty_c(\Omega)$ such that $\eta(x)\equiv1$ in a neighborhood of $0$ contained in $\Omega$.
\medskip

\noindent
{\bf {Case 1:}} Test-functions for the case when $\displaystyle \gamma\leq  \frac{(n-2)^2 }{4} - \frac{(2-\theta)^2}{4}$.
\medskip

\noindent
For $\eps>0$, we consider the test functions $u_{\eps}\in D^{1,2}(\Omega)$ defined by
$u_{\eps}(x):=\eta(x)U_{\eps}(x)$ for $x\in \overline{\Omega}\setminus\{0\}$.
To estimate $J_{\gamma,h}^{\Omega}(u_{\eps})$, we use the bounds on $U_{\eps}$ to obtain
\begin{align*}
\displaystyle\int _\Omega b(x) \frac{u_{\eps}^{\crits}}{|x|^s}\, dx &= \displaystyle\int _{B_\delta(0)} b(x)\frac{U_{\eps}^{\crits}}{|x|^s}~ dx+ \displaystyle\int _{ \Omega \setminus B_\delta(0)} b(x) \frac{u_{\eps}^{\crits}}{|x|^s}~ dx \\
&=\displaystyle\int_{B_{\eps^{-1}\delta}(0)}b(\eps x)\frac{U^{\crits}}{|x|^s}\, dx+   \displaystyle\int _{B_{\eps^{-1}\delta}(0)} b(\eps  x) \eta (\eps x )^{\crits} \frac{U^{\crits}}{|x|^s}\, dx\\
&=b(0) \displaystyle\int _{\R^{n}} \frac{U^{\crits}}{|x|^s}~ dx+ O \left(\eps^{ \frac{\crits}{2}  (\beta_{+}-\beta_{-})} \right).
\end{align*}
Similarly, one also has
\begin{align*}
\displaystyle\int _{\Omega}\left(|\nabla u_{\eps}|^2-\frac{\gamma}{|x|^2}u_{\eps}^2\right)\, dx &=  \displaystyle\int _{B_\delta(0)}\left(|\nabla U_{\eps}|^2-\frac{\gamma}{|x|^2}U_{\eps}^2\right)~ dx+   \displaystyle\int _{\Omega \setminus B_\delta(0)}\left(|\nabla u_{\eps}|^2-\frac{\gamma}{|x|^2} u_{\eps}^2\right)~ dx\\
&= \displaystyle\int _{B_{\eps^{-1}\delta}(0)} \left(|\nabla U|^2-\frac{\gamma}{|x|^2}U^2\right)~ dx+ O\left(\eps^{\beta_{+}-\beta_{-}}\right)\\
&= \displaystyle\int _{\R^{n}} \left(|\nabla U|^2-\frac{\gamma}{|x|^2}U^2\right)~ dx+ O\left(\eps^{\beta_{+}-\beta_{-}}\right)\\
&=\chi  \displaystyle\int _{\R^{n}} \frac{U^{\crits}}{|x|^s}~ dx+ O\left(\eps^{\beta_{+}-\beta_{-}}\right).
\end{align*}
\noindent
Estimating  the lower order terms as $\eps\to 0$ gives
\begin{equation*}
\displaystyle\int _{\Omega} \tilde{h}(x)u_{\eps}^2~dx  =
 \left \{ \begin{array} {lc}
           \eps^{2-\theta}  \left[\, \C_{2}\displaystyle\int _{\R^{n}}  \frac{U^{2}}{|x|^{\theta}} dx  + o(1)\right] \qquad    &\hbox{ if }~ \beta_{+}-\beta_{-} >2-\theta, \\
           $~$\\
           \eps^{2-\theta} \log \left( \dfrac{1}{\eps}\right) \left[\,\C_{2} \omega_{n-1} + o(1) \right] \qquad  &\hbox{ if  }~ \beta_{+}-\beta_{-} =2-\theta,\\~\\
          O\left(\eps^{\beta_{+}-\beta_{-}}\right) \qquad   &\hbox{ if }~ \beta_{+}-\beta_{-} <2-\theta.
            \end{array} \right.
\end{equation*}
And
\begin{equation*}
-\C_{1}\displaystyle\int _{\Omega} \frac{\log |x|}{|x|^{\theta}}u_{\eps}^2~dx  =
 \left \{ \begin{array} {lc}
          \C_{1} \eps^{2-\theta} \log \left( \frac{1}{\eps}\right) \left[ ~  \displaystyle\int _{\R^{n}}  \frac{U^{2}}{|x|^{\theta}} dx  + o(1)\right] \qquad    &\hbox{ if }~ \beta_{+}-\beta_{-} >2-\theta, \\~\\
           \C_{1}\eps^{2-\theta} \left( \log \left( \dfrac{1}{\eps}\right) \right)^{2} \left[ \,\dfrac{\omega_{n-1}}{2} + o(1) \right] \qquad  &\hbox{ if  }~ \beta_{+}-\beta_{-} =2-\theta,\\
           ~\\
          O\left(\eps^{\beta_{+}-\beta_{-}}\right) \qquad   &\hbox{ if }~ \beta_{+}-\beta_{-} <2-\theta.
            \end{array} \right.
\end{equation*}
\noindent
Note that $\displaystyle \beta_{+}-\beta_{-} \geq 2 -\theta$ if and only if  $  \gamma \leq  \frac{(n-2)^2}{4}-\frac{(2-\theta)^{2}}{4}$. Therefore,
\begin{align*}
\displaystyle\int _{\Omega} h(x)u_{\eps}^2~dx   =   \left \{ \begin{array} {lc}
                  \eps^{2-\theta} \displaystyle\int _{\R^{n}}  \frac{U^{2}}{|x|^{\theta}} dx \left[  ~\C_{1} \log \left( \frac{1}{\eps}\right) (1+o(1)) +\C_{2}+ o(1)\right]    $~$    &\hbox{ if }~ \gamma< \frac{(n-2)^2}{4}-\frac{(2-\theta)^{2}}{4}, \\
           $~$\\\displaystyle
           \eps^{2-\theta} \log \left( \frac{1}{\eps}\right) \frac{\omega_{n-1}}{2} \left[ ~\C_{1} \log \left( \frac{1}{\eps}\right) (1+o(1)) +2\C_{2}+ o(1) \right] $~$  &\hbox{ if  }~ \gamma= \frac{(n-2)^2}{4}-\frac{(2-\theta)^{2}}{4}.\\
            \end{array} \right.
\end{align*}
\noindent
Combining the above estimates, we obtain as $\eps\to 0$,
\[\begin{split}
J^{\Omega}_{\gamma,h}(u_{\eps}
&)=
    \frac{\displaystyle\int _{\Omega} \left(\absgrad{u_{\eps}}^2 -\gamma \frac{u_{\eps}^{2}}{|x|^{2}}- h(x) u_{\eps}^{2}\right) dx}{\left(~\displaystyle\int _{\Omega}  b(x) \frac{|u_{\eps}|^{\crits}}{|x|^{s}} \,dx\right)^{{2}/{\crits}}} \\
&=
    \frac{ \mu_{\gamma, 0}(\R^{n})}{b(0)^{2/\crits}}-
        \begin{cases}
        \dfrac{\displaystyle\int _{\R^{n}}  \frac{U^{2}}{|x|^{\theta}}\,dx}
        {\left( b(0) \displaystyle\int _{\R^{n}} \frac{U^{\crits}}{|x|^s}~ dx\right)^{2/\crits}} \eps^{ 2 -\theta}\left[\,\C_{1} \log \left( \dfrac{1}{\eps}\right) (1+o(1)) +\C_{2}+ o(1)\right]\\
            \hfill\textif\gamma<\frac{(n-2)^2}{4}-\frac{(2-\theta)^{2}}{4},\\
            ~\\
        \dfrac{ \omega_{n-1}}{2 \left( b(0) \displaystyle\int _{\R^{n}} \frac{U^{\crits}}{|x|^s}~ dx\right)^{2/\crits}} \eps^{ 2 -\theta} \log \left(\dfrac{1}{\eps}\right)\left[\,\C_{1} \log \left( \dfrac{1}{\eps}\right) (1+o(1)) +2\C_{2}+ o(1)\right]\\
            \hfill\textif\gamma=\frac{(n-2)^2}{4}-\frac{(2-\theta)^{2}}{4},
        \end{cases}
\end{split}\]
as long as $\beta_{+}-\beta_{-} \geq 2-\theta.$ Thus,  for $\eps$ sufficiently small, the assumption that either   $ \C_{1}>0$ or $\C_{1}=0$, $\C_{2}>0$ guarantees that
$$\mu_{\gamma,  h}(\Omega)\leq J_{\gamma,h}^\Omega(u_{\eps}) <\frac{ \mu_{\gamma,0}(\R^{n})}{b(0)^{2/\crits}}.$$
It then follows  from Lemma \ref{low energy compactness} that $ \mu_{\gamma,h}(\Omega)$ is attained.
\bigskip

\noindent
{\bf {Case 2:}} Test-functions for the case when $\displaystyle   \frac{(n-2)^2 }{4} - \frac{(2-\theta)^2}{4} < \gamma < \frac{(n-2)^2}{4}$.
\medskip

\noindent
Here $h(x)$  and $\theta$ given by \eqref{eq:singh} satisfy the hypothesis of Proposition \eqref{Propoaition -Regularity}.
 Since $\gamma> \frac{(n-2)^2}{4}-\frac{(2-\theta)^2}{4}$, it follows from  \eqref{def:mass} that there exists $\beta\in D^{1,2}(\Omega)$ such that
\begin{equation}\label{asymp:beta}
\beta(x)\simeq_{x\to 0}\frac{m_{\gamma, h}(\Omega)}{|x|^{\beta_{-}}}.
\end{equation}
The function $K(x):=\frac{\eta(x)}{|x|^{\beta_{+}}}+\beta(x)$ for $x\in \Omega\setminus\{0\}$ satisfies the equation:
\begin{equation}\label{eqnH}
\left\{\begin{array}{@{}ll}
-\Delta K-\left(\frac{\gamma}{|x|^2}+h(x)\right)K=0 &\hbox{ in }\Omega\setminus\{0\}\\
\hfill K>0 &\hbox{ in }\Omega\setminus\{0\}\\
\hfill K=0 &\hbox{ on }\partial\Omega.
\end{array}\right.
\end{equation}
\noindent
Define the test functions
$$u_{\eps}(x):= \eta(x) U_{\eps} + \eps^{\frac{\beta_{+}-\beta_{-}}{2}} \beta(x) \qquad  \hbox{ for } x \in \overline{\Omega}\setminus \{ 0\}$$
The functions $u_{\eps} \in  D^{1,2}(\Omega)$ for all $\eps >0$. We  estimate  $J_{\gamma, h}^\Omega(u_{\eps})$.
\bigskip

\noindent
{\bf Step 1:} Estimates for  $\displaystyle \displaystyle\int _{\Omega} \left( |\nabla u_{\eps}|^{2} - \left( \frac{\gamma}{|x|^{2}} +h(x)\right) u_{\eps}^{2}\right) dx$.
\medskip

\noindent
Take $\delta>0$ small enough such that $\eta(x)=1$ in $ B_\delta(0)\subset \Omega$.
We decompose the integral as
\begin{multline*}
\displaystyle\int _{\Omega} \left( |\nabla u_{\eps}|^{2} - \left( \frac{\gamma}{|x|^{2}} + h(x)\right) u_{\eps}^{2}\right) dx \\
=
\displaystyle\int _{B_{\delta}(0)} \left( |\nabla u_{\eps}|^{2} - \left( \frac{\gamma}{|x|^{2}} + h(x)\right) u_{\eps}^{2}\right) dx+\displaystyle\int _{\Omega \setminus B_{\delta}(0)} \left( |\nabla u_{\eps}|^{2} - \left( \frac{\gamma}{|x|^{2}} + h(x)\right) u_{\eps}^{2}\right) dx.
\end{multline*}
By standard elliptic estimates, it follows that
$\lim _{\eps \to 0}  \frac{u_{\eps}}{\eps^{\frac{\beta_{+}-\beta_{-}}{2} }}= K$ in $C^{2}_{\loc}(\overline{\Omega}\setminus \{ 0\}).$
Hence
\[\begin{split}
&\hspace{-1cm}\lim _{\eps \to 0}\frac{ \displaystyle\int _{\Omega\setminus B_{\delta}(0)} \left( |\nabla u_{\eps}|^{2} - \left( \frac{\gamma}{|x|^{2}} + h(x)\right) u_{\eps}^{2}\right) dx}{\eps^{\beta_{+}-\beta_{-}}}
=
\displaystyle\int _{\Omega\setminus B_{\delta}(0)}  \left( |\nabla K|^{2} - \left( \frac{\gamma}{|x|^{2}} + h(x)\right) K^{2}\right) dx \notag \\
&=
\displaystyle\int _{\Omega\setminus B_{\delta}(0)}  \left( - \Delta K -  \left( \frac{\gamma}{|x|^{2}} + h(x)\right)K\right) K~dx + \displaystyle\int _{\partial \left(\Omega\setminus B_{\delta}(0)\right)} K \partial_{\nu} K~ d\sigma \notag\\
&=
\displaystyle\int _{\partial \left(\Omega\setminus B_{\delta}(0)\right)} K \partial_{\nu} K~ d\sigma = -\displaystyle\int _{\partial  B_{\delta}(0)} K \partial_{\nu} K ~d\sigma.
\end{split}\]
Since $\beta_{+}+\beta_{-}=n-2$, using elliptic estimates, and the definition of $K$ gives us
$$K\partial_{\nu} K=-\frac{\beta_{+}}{|x|^{1+2\beta_{+}}}-(n-2) \frac{m_{\gamma, h}(\Omega)}{|x|^{n-1}}+o \left( \frac{1}{|x|^{n-1}} \right) \quad \hbox{as $x\to 0$.} $$
Therefore,
\begin{align*}
\displaystyle\int _{\Omega \setminus B_{\delta}(0)} \left( |\nabla u_{\eps}|^{2} - \left( \frac{\gamma}{|x|^{2}} +h(x)\right) u_{\eps}^{2}\right) dx= \eps^{\beta_{+}-\beta_{-}}\omega_{n-1}\left(\frac{\beta_{+}}{\delta^{\beta_{+}-\beta_{-}}}+(n-2)m_{\gamma, h}(\Omega)+o_\delta(1)\right)
\end{align*}
\medskip

\noindent
Now, we estimate the term   $\displaystyle \displaystyle\int _{B_{\delta}(0)} \left( |\nabla u_{\eps}|^{2} - \left( \frac{\gamma}{|x|^{2}} + h(x)\right) u_{\eps}^{2}\right) dx$. \\
First,  $u_{\eps}(x)=U_{\eps}(x)+\eps^{\frac{\beta_{+}-\beta_{-}}{2}}\beta(x)$ for $x\in B_\delta(0)$, therefore after integration by parts, we obtain
\[\begin{split}
 \displaystyle\int _{B_{\delta}(0)} \left( |\nabla u_{\eps}|^{2} - \left( \frac{\gamma}{|x|^{2}} +h(x)\right) u_{\eps}^{2}\right) dx
 &= \displaystyle\int _{B_{\delta}(0)} \left( |\nabla U_{\eps}|^{2}
 - \left( \frac{\gamma}{|x|^{2}} + h(x)\right) U_{\eps}^{2}\right) dx\\
 &\quad+2 \eps^{\frac{\beta_{+}-\beta_{-}}{2}}  \displaystyle\int _{B_{\delta}(0)} \left( \nabla U_{\eps}\cdot \nabla \beta - \left( \frac{\gamma}{|x|^{2}}  + h(x)\right) U_{\eps} \beta\right) dx \notag\\
 &\quad+ \eps^{\beta_{+}-\beta_{-}} \displaystyle\int _{B_{\delta}(0)} \left( |\nabla \beta|^{2} - \left( \frac{\gamma}{|x|^{2}} + h(x)\right) \beta^{2}\right) dx  \notag\\
 &= \displaystyle\int _{B_{\delta}(0)} \left(-\Delta  U_{\eps} -  \frac{\gamma}{|x|^{2}}  U_{\eps}\right) U_{\eps}~ dx  + \displaystyle\int _{\partial B_{\delta}(0)}  U_{\eps} \partial_{\nu} U_{\eps}~ d\sigma \\
 &\quad - \displaystyle\int _{B_{\delta}(0)} h(x) U_{\eps}^{2}~ dx +2  \eps^{\frac{\beta_{+}-\beta_{-}}{2}}  \displaystyle\int _{B_{\delta}(0)} \left(-\Delta  U_{\eps} ~ dx  -  \frac{\gamma}{|x|^{2}} U_{\eps}\right) \beta~ dx\\
 &\quad  -  2  \eps^{\frac{\beta_{+}-\beta_{-}}{2}}  \displaystyle\int _{B_{\delta}(0)}  h(x) U_{\eps}\beta  ~ dx\notag + 2 \eps^{\frac{\beta_{+}-\beta_{-}}{2}}  \displaystyle\int _{\partial B_{\delta}(0)} \beta \partial_{\nu}U_{\eps} ~ d\sigma \\
 &\quad+ \eps^{\beta_{+}-\beta_{-}} \displaystyle\int _{B_{\delta}(0)} \left( |\nabla \beta|^{2} - \left( \frac{\gamma}{|x|^{2}} +h(x)\right) \beta^{2}\right) dx.
\end{split}\]
\noindent
We now estimate each of the above terms. First, using  equation \eqref{eq:U1} and the expression for $U_{\eps},$ we obtain
\begin{align*}
 \displaystyle\int _{B_{\delta}(0)} \left(-\Delta  U_{\eps} -  \frac{\gamma}{|x|^{2}}  U_{\eps}\right) U_{\eps}~ dx &=\chi  \displaystyle\int _{B_\delta(0)} \frac{U_{\eps}^{\crits}}{|x|^s}~ dx \\
&=\chi \displaystyle\int _{\R^{n}} \frac{U^{\crits}}{|x|^s}~ dx+ O \left(\eps^{ \frac{\crits}{2}  (\beta_{+}-\beta_{-})} \right),
\end{align*}
and
\begin{align*}
\displaystyle\int _{\partial B_{\delta}(0)}  U_{\eps} \partial_{\nu} U_{\eps}~ d\sigma = -\beta_{+}\omega_{n-1} \frac{\eps^{\beta_{+}-\beta_{-}}}{\delta^{\beta_{+}-\beta_{-}}}
    +o_\delta\left(\eps^{\beta_{+}-\beta_{-}}\right) \quad \hbox{as $\eps \to 0$.}
\end{align*}
\medskip

\noindent
Note that $$\displaystyle \beta_{+}-\beta_{-} < 2-\theta \iff  \gamma >  \frac{(n-2)^2}{4}- \frac{(2-\theta)^2}{4} \implies 2 \beta_{+} +\theta< n.$$ Therefore,
\begin{align*}
& \displaystyle\int _{B_{\delta}(0)}  h(x)  U_{\eps}^{2}~ dx=O \left( \eps^{\beta_{+}-\beta_{-}}   \displaystyle\int _{B_{\delta}(0)} \frac{1}{|x|^{2\beta_{+}+\theta}} dx  \right)=o_\delta\left(\eps^{\beta_{+}-\beta_{-}}\right) \quad \hbox{as $\eps\to 0$.} \\
\end{align*}
\noindent
Again from equation \eqref{eq:U1} and the expression for $U$ and $\beta,$ we get that
\begin{align*}
\displaystyle\int _{B_{\delta}(0)} \left(-\Delta  U_{\eps} ~ dx -  \frac{\gamma}{|x|^{2}} U_{\eps}\right) \beta~ dx & =\eps^{\frac{\beta_{+}+\beta_{-}}{2}} \displaystyle\int _{B_{\eps^{-1}\delta}(0)} \left(-\Delta  U ~ dx -  \frac{\gamma}{|x|^{2}} U \right) \beta(\eps x)~ dx
\end{align*}
\begin{align*}
&= m_{\gamma, h}(\Omega) \eps^{\frac{\beta_{+}-\beta_{-}}{2}} \displaystyle\int _{B_{\eps^{-1}\delta}(0)} \left(-\Delta  U ~ dx -  \frac{\gamma}{|x|^{2}} U \right) |x|^{-\beta_{-}}~ dx + o_{\delta} \left(  \eps^{\frac{\beta_{+}-\beta_{-}}{2}} \right) \\
&= m_{\gamma, h}(\Omega) \eps^{\frac{\beta_{+}-\beta_{-}}{2}} \displaystyle\int _{B_{\eps^{-1}\delta}(0)} \left(-\Delta  |x|^{-\beta_{-}} ~ dx -  \frac{\gamma}{|x|^{2}} |x|^{-\beta_{-}} \right) U~ dx\\
& \quad -  m_{\gamma, h}(\Omega) \eps^{\frac{\beta_{+}-\beta_{-}}{2}}\displaystyle\int _{\partial B_{\eps^{-1} \delta}(0)}   \frac{\partial_{\nu} U}{|x|^{\beta_{-}}}~ d\sigma + o_{\delta} \left(  \eps^{\frac{\beta_{+}-\beta_{-}}{2}} \right) \\
&= \beta_{+} m_{\gamma, h}(\Omega) \omega_{n-1}\eps^{\frac{\beta_{+}-\beta_{-}}{2}}+o_{\delta} \left(  \eps^{\frac{\beta_{+}-\beta_{-}}{2}} \right) .
\end{align*}
Similarly,
\begin{align*}
 \displaystyle\int _{\partial B_{\delta}(0)} \beta \partial_{\nu}U_{\eps} ~ d\sigma =- \beta_{+} m_{\gamma, h}(\Omega)\omega_{n-1}\eps^{\frac{\beta_{+}-\beta_{-}}{2}}+o_{\delta} \left(  \eps^{\frac{\beta_{+}-\beta_{-}}{2}} \right).
\end{align*}
Since  $\beta_{+}+\beta_{-}+\theta=n-(2-\theta) <n,$ we have
\begin{align*}
  \displaystyle\int _{B_{\delta}(0)}  h(x) U_{\eps}\beta ~ dx &= O \left( \eps^{\frac{\beta_{+}-\beta_{-}}{2}}  \displaystyle\int _{B_{\delta}(0)}  \frac{1}{|x|^{\beta_{+}+\beta_{-} +\theta}} ~dx \right) \\
  &= o_{\delta}\left( \eps^{\frac{\beta_{+}-\beta_{-}}{2}}  \right).
\end{align*}
\medskip

\noindent
And, finally
\begin{align*}
\eps^{\beta_{+}-\beta_{-}} \displaystyle\int _{B_{\delta}(0)} \left( |\nabla \beta|^{2} - \left( \frac{\gamma}{|x|^{2}}  + h(x)\right) \beta^{2}\right) dx=o_\delta(\eps^{\beta_{+}-\beta_{-}}).
\end{align*}
\medskip
\noindent
Combining all the estimates, we get
\begin{align*}
\displaystyle\int _{B_{\delta}(0)} \left( |\nabla u_{\eps}|^{2} - \left( \frac{\gamma}{|x|^{2}} +h(x)\right) u_{\eps}^{2}\right) dx   =\chi \displaystyle\int _{\R^{n}} \frac{U^{\crits}}{|x|^s}~ dx-\beta_{+}\omega_{n-1} \frac{\eps^{\beta_{+}-\beta_{-}}}{\delta^{\beta_{+}-\beta_{-}}}+o_\delta(\eps^{\beta_{+}-\beta_{-}}).
\end{align*}
\medskip

\noindent
So,
\begin{align}
\displaystyle\int _{\Omega} \left( |\nabla u_{\eps}|^{2} - \left( \frac{\gamma}{|x|^{2}} +h(x)\right) u_{\eps}^{2}\right) dx =& \chi \displaystyle\int _{\R^{n}} \frac{U^{\crits}}{|x|^s}~ dx +  \omega_{n-1} (n-2)m_{\gamma, h}(\Omega) \eps^{\beta_{+}-\beta_{-}} + o_\delta(\eps^{\beta_{+}-\beta_{-}}).\notag
\end{align}
\medskip

\noindent
{\bf Step 2:} Estimating  $\displaystyle \displaystyle\int _\Omega b(x) \frac{u_{\eps}^{\crits}}{|x|^s}~ dx.$
\medskip

\noindent
One has  for $\delta >0$ small
\begin{align}
\displaystyle\int _\Omega b(x) \frac{u_{\eps}^{\crits}}{|x|^s}~ dx &= \displaystyle\int _{B_\delta(0)} b(x)\frac{u_{\eps}^{\crits}}{|x|^s}~ dx+ \displaystyle\int _{ \Omega \setminus B_\delta(0)} b(x) \frac{u_{\eps}^{\crits}}{|x|^s}~ dx \notag  \\
&= \displaystyle\int _{B_\delta(0)} b(x)\frac{  \left( U_{\eps}(x)+\eps^{\frac{\beta_{+}-\beta_{-}}{2}} \beta(x) \right)^{\crits}}{|x|^s}~ dx+ o(\eps^{\beta_{+}-\beta_{-}}) \notag \\
&= \displaystyle\int _{B_\delta(0)} b(x)\frac{U_{\eps}^{\crits}}{|x|^s}~ dx+\eps^{\frac{\beta_{+}-\beta_{-}}{2}} \crits  \displaystyle\int _{B_\delta(0)} b(x)\frac{U_{\eps}^{\crits-1}}{|x|^s}\beta ~ dx\notag  \\
&\quad + o(\eps^{\beta_{+}-\beta_{-}}) \notag \\
&= \displaystyle\int _{B_\delta(0)} b(x)\frac{U_{\eps}^{\crits}}{|x|^s}~ dx+ \eps^{\frac{\beta_{+}-\beta_{-}}{2}}\frac{\crits}{\chi}  \displaystyle\int _{B_{\delta}(0)} b(x) \left(-\Delta  U_{\eps} ~ dx -  \frac{\gamma}{|x|^{2}} U_{\eps}\right) \beta~ dx \notag \\
&\quad +o(\eps^{\beta_{+}-\beta_{-}})\notag  \\
&=b(0) \displaystyle\int _{\R^{n}} \frac{U^{\crits}}{|x|^s}~ dx + \frac{\crits}{\chi} b(0) \beta_{+} m_{\gamma, \lambda, a}(\Omega) \omega_{n-1}\eps^{\beta_{+}-\beta_{-}}+o(\eps^{\beta_{+}-\beta_{-}}).
\end{align}
\medskip

\noindent
So, we obtain
\begin{align}
J^{\Omega}_{\gamma,\lambda,a}(u_{\eps})&=  \frac{\displaystyle\int _{\Omega} \left(\absgrad{u_{\eps}}^2 -\gamma \frac{u_{\eps}^{2}}{|x|^{2}}-h(x) u_{\eps}^{2}\right) dx}{\left(~\displaystyle\int _{\Omega}  b(x) \frac{|u_{\eps}|^{\crits}}{|x|^{s}} \,dx\right)^{{2}/{\crits}}} \notag \\
&=\frac{ \mu_{\gamma, 0}(\R^{n})}{b(0)^{2/\crits}}- m_{\gamma, h}(\Omega)\frac{\omega_{n-1} (\beta_{+}-\beta_{-}) }{\left( b(0) \displaystyle\int _{\R^{n}} \frac{U^{\crits}}{|x|^s}~ dx\right)^{2/\crits}}  \eps^{\beta_{+}-\beta_{-}} + o(\eps^{\beta_{+}-\beta_{-}}).
\end{align}
Therefore,   if $m_{\gamma, h}(\Omega)>0$, we get for $\eps$ sufficiently small
$$\mu_{\gamma,  h}(\Omega)\leq J_{\gamma, h}^\Omega(u_{\eps}) <\frac{ \mu_{\gamma,0}(\R^{n})}{b(0)^{2/\crits}}.$$
Then, from Lemma \ref{low energy compactness} it follows that $ \mu_{\gamma,h}(\Omega)$ is attained.
$\hfill \Box$

\medskip

\begin{remark}  Assume for simplicity that $h(x)= \lambda |x|^{-\theta}$ where $0 \leq \theta<2$. There is a threshold  $\lambda^{*}(\Omega) \geq 0$ beyond which the infimum $ \mu_{\gamma,\lambda}(\Omega)$ is achieved, and below which, it is not. In fact,
$$\lambda^{*}(\Omega):= \sup\{\lambda:  \mu_{\gamma,\lambda }(\Omega) =  \mu_{\gamma,0 }(\R^{n})\}.$$
Performing a blow-up analysis  like in  \cite{Ghoussoub--Robert-remaining cases} one can obtain the following sharp results:

\begin{itemize}
\item
In high dimensions, that is for  $ \gamma \leq \frac{(n-2)^{2}}{4}-\frac{(2-\theta)^{2}}{4} $ one has  $\lambda^{*}(\Omega)=0$ and the  infimum $ \mu_{\gamma,\lambda }(\Omega)$ is achieved if and only if $\lambda > \lambda^{*}(\Omega)$.\\
\item
In low  dimensions, that is for  $\frac{(n-2)^{2}}{4}-\frac{(2-\theta)^{2}}{4} <\gamma $,  one has  $\lambda^{*}(\Omega)>0$   and  $ \mu_{\gamma,\lambda}(\Omega)$ is not achieved  for  $\lambda < \lambda^{*}(\Omega)$ and $ \mu_{\gamma,\lambda}(\Omega)$ is achieved for $\lambda > \lambda^{*}(\Omega)$. Moreover
under the assumption  $ \mu_{\gamma,\lambda^{*}}(\Omega)$ is not achieved,  we have that $m_{\gamma, \lambda^{*} }(\Omega)=0$, and $ \lambda^{*}(\Omega)= \sup \{\lambda :  m_{\gamma, \lambda }(\Omega) \leq 0\}$.
\end{itemize}

\end{remark}

\section{Existence results for compact submanifolds  of $\Bn$ }

Back to the following Dirichlet boundary value problem in hyperbolic space. Let $\Omega \Subset \Bn$ ($n \geq 3$) be a bounded smooth domain such that $0 \in \Omega$. 
We consider the Dirichlet boundary value problem:
\begin{equation} \label{hyperHSeqn}
\left\{ \begin{array}{@{}l@{\;}ll}
-\Delta_{\Bn}u-\gamma{V_2}u -\lambda u&=V_{\crits}u^{\crits-1} &\textin\Omega \\
\hfill u &\geq 0  &\textin\Omega  \\
\hfill u &=0   & \hbox{ on }  \partial \Omega,
\end{array} \right.
\end{equation}
where $\lambda \in \R$, $0<s<2$ and $\gamma <\gamma_H:= \frac{(n-2)^2}{4}.$

We shall use the conformal transformation $ g_{\Bn}= \varphi^{\frac{4}{n-2}}g_{\hbox{Eucl}}$, where $ \varphi= \left( \frac{2}{1-r^{2}} \right)^{\frac{n-2}{2}} $ to map the problem into $\R^n$.
We start by  considering the general equation :
\begin{align}\label{Eqt:hyperbolic H-S operator = f(x,u)}
-\Delta_{\Bn}u-\gamma{V_2}u-\lambda u=f(x,u)  \quad\textin\Omega \Subset \Bn,
\end{align}
where $f(x,u)$ is a Carath\'{e}odory function such that
$$ | f(x,u) | \le C|u| \left(  1 + \frac{|u|^{\crits-2}}{r^s}\right)  \quad  \text{ for all
} x \in \Omega.$$
If $u$ satisfies (\ref{Eqt:hyperbolic H-S operator = f(x,u)}), then
 ${ v:= \varphi u}$ satisfies the equation:
\begin{align*}
-\Delta v  -\gamma \left(\frac{2}{1-r^{2}}\right)^{2} V_{2} v-  \left[\lambda -\frac{n(n-2)}{4}  \right]  \left(\frac{2}{1-r^{2}}\right)^{2}  v = \varphi^{\frac{n+2}{n-2}} f\left(x, \frac{v}{\varphi} \right)  \quad\textin\Omega.
\end{align*}
On the other hand, we have the following expansion for $\left(\frac{2}{1-r^{2}}\right)^{2} V_2:$
\begin{align*}
\left(\frac{2}{1-r^{2}}\right)^{2} V_{2}(x)=  \frac{1}{(n-2)^{2}}\left( \frac{f(r)^{}}{ G(r)} \right)^{2}
\end{align*}
where $f(r)$ and $G(r)$ are given by \eqref{hypgreen}. We then obtain  that
\begin{align}
\left(\frac{2}{1-r^{2}}\right)^{2} V_{2}(x)=
\left \{ \begin{array} {lc}
           \frac{1}{r^{2}} + \frac{4}{r} \left( \frac{1}{1-r}\right)+ \frac{4}{(1-r)^{2}} \qquad    &\hbox{ when }~ n=3, \\
           $~$\\
               \frac{1}{r^{2}} +8 \log \frac{1}{r}-4 +g_{4}(r) \qquad  &\hbox{ when }~ n=4, \\
           $~$\\
              \frac{1}{r^{2}} +\frac{4(n-2)}{n-4}+ r g_{n}(r)  \qquad   &\hbox{ when }~ n\geq5.
            \end{array} \right.
\end{align}
where  for all $n \geq 4$, $g_{n}(0)=0$ and $g_{n}$ is $C^{0}([0,\delta])$ for $\delta <1$.
\medskip

\noindent
This implies that $v:= \varphi u$ is a solution to
\begin{align*}
-\Delta v  - \frac{\gamma}{r^{2}} v- \left[ \gamma a(x) + \left(\lambda - \frac{n(n-2)}{4}  \right) \left(\frac{2}{1-r^{2}}\right)^{2} \right] v= \varphi^{\frac{n+2}{n-2}} f\left(x, \frac{v}{\varphi} \right).
\end{align*}
where $a(x)$ is defined in \eqref{def:a}.
We can therefore state the following lemma:

\begin{lemma}\label{Lemma:relation between Hyp and Euc solution on bounded domain}
A non-negative function $u \in H_0^1(\Omega)$ solves  \eqref{hyperHSeqn} if and only if  ${ v:= \varphi u} \in H^1_0(\Omega)$ satisfies
\begin{equation} \label{Problem on bounded domain:Euclidean}
\left\{ \begin{array}{@{}l@{\;}ll}
-\Delta v  - \left( \frac{\gamma}{|x|^{2}} + h_{\gamma,\lambda}(x)\right)  v &=b(x) \frac{v^{\crits-1}}{|x|^{s}} &\textin\Omega \\
\hfill v &\geq0 & \textin \Omega\\
\hfill v &=0   & \hbox{ on }  \partial \Omega,
\end{array} \right.
\end{equation}
where    
$$ \displaystyle h_{\gamma,\lambda}(x)= \gamma a(x)+\frac{4\lambda - {n(n-2)}}{(1-|x|^{2})^{2}},
$$
and
$a(x)$ is defined  in \eqref{def:a},   
and $b(x)$ is a positive function in $C^1(\overline{\Omega})$  with $  \displaystyle b(0)=\frac{(n-2)^{\frac{n-s}{n-2}}}{2^{2-s}} ~ \hbox{ and }~ \nabla b(0)=0$.
Moreover, the  hyperbolic operator  $\LB : = \displaystyle -\Delta_{\Bn}-\gamma{V_2}-\lambda$ is coercive if and only if the corresponding Euclidean operator $\displaystyle L^{\R^n}_{\gamma, h}:=  -\Delta   - \left( \frac{\gamma}{|x|^{2}} + h_{\gamma,\lambda}(x)  \right)   $ is coercive.

\end{lemma}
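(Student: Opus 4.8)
The plan is to obtain everything from the conformal covariance of the Yamabe operator, keeping careful track of the weights. Since $g_{\Bn}=\varphi^{\frac{4}{n-2}}g_{\hbox{Eucl}}$ with $\varphi=\big(\tfrac{2}{1-|x|^{2}}\big)^{\frac{n-2}{2}}$, the Euclidean metric is scalar-flat while $R_{g_{\Bn}}=-n(n-1)$, so the transformation law $L_{g_{\Bn}}\psi=\varphi^{-\frac{n+2}{n-2}}L_{g_{\hbox{Eucl}}}(\varphi\psi)$ for the conformal Laplacians $L_{g}=-\Delta_{g}+\tfrac{n-2}{4(n-1)}R_{g}$ becomes the pointwise identity
\[
-\Delta_{\Bn}\psi-\tfrac{n(n-2)}{4}\,\psi=\varphi^{-\frac{n+2}{n-2}}\big(-\Delta(\varphi\psi)\big)\qquad\text{in }\Omega ,
\]
valid for any $\psi$. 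First I would apply this with $\psi=u$, set $v=\varphi u$ (so $u=\varphi^{-1}v$ and $u\ge0\iff v\ge0$, since $\varphi>0$), substitute into \eqref{hyperHSeqn}, and multiply by $\varphi^{\frac{n+2}{n-2}}$. Using $\varphi^{\frac{4}{n-2}}=\big(\tfrac{2}{1-|x|^{2}}\big)^{2}$, the exponent arithmetic $\tfrac{n+2}{n-2}-(\crits-1)=\tfrac{2s}{n-2}$, and $\varphi^{\frac{2s}{n-2}}=\big(\tfrac{2}{1-|x|^{2}}\big)^{s}$, this reproduces the computation carried out just before the lemma and yields
\[
-\Delta v-\big(\tfrac{\gamma}{|x|^{2}}+h_{\gamma,\lambda}\big)v=b(x)\,\tfrac{v^{\crits-1}}{|x|^{s}}\quad\text{in }\Omega,
\]
with $b(x):=|x|^{s}\big(\tfrac{2}{1-|x|^{2}}\big)^{s}V_{\crits}(|x|)$, and $h_{\gamma,\lambda}$ read off from $\gamma\big(\tfrac{2}{1-|x|^{2}}\big)^{2}V_{2}+\big(\lambda-\tfrac{n(n-2)}{4}\big)\big(\tfrac{2}{1-|x|^{2}}\big)^{2}$ using the expansion $\big(\tfrac{2}{1-|x|^{2}}\big)^{2}V_{2}=\tfrac{1}{(n-2)^{2}}\big(\tfrac{f}{G}\big)^{2}=\tfrac{1}{|x|^{2}}+a(x)$ recorded before the lemma, i.e. $h_{\gamma,\lambda}=\gamma a+\tfrac{4\lambda-n(n-2)}{(1-|x|^{2})^{2}}$. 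Because $\varphi\in C^{\infty}(\overline{\Omega})$ is bounded between positive constants ($\overline{\Omega}$ staying away from $\{|x|=1\}$), $\psi\mapsto\varphi\psi$ is a bijection of $C^{\infty}_{c}(\Omega)$ and of $H^{1}_{0}(\Omega)$; carrying the same substitution through the weak formulations — pairing a test function $\phi$ for the hyperbolic equation with $\eta=\varphi\phi$ — identifies the two weak problems, and since every step is reversible one obtains the ``if and only if''.

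The claimed properties of $b$ and $h_{\gamma,\lambda}$ then come from the $r\to0$ asymptotics of $f$ and $G$. Away from $0$, $f$, $G$, $\varphi$ (hence $b$, $h_{\gamma,\lambda}$) are smooth because $\overline{\Omega}\Subset\Bn$. Near $0$ one writes $f(r)=(1-r^{2})^{n-2}r^{1-n}$, integrates termwise to get $G(r)=r^{2-n}\widetilde G(r)$ with $\widetilde G(0)=\tfrac{1}{n-2}$ and $\widetilde G$ a power series in $r^{2}$ up to a contribution of order $r^{n-2}\log r$ (which only reaches the order relevant to $a$ when $n=4$, producing the logarithm in \eqref{def:a}), and then substitutes into $b(x)=|x|^{s}f(r)^{2}(1-r^{2})^{2}\big(4(n-2)^{2}G(r)^{(\crits+2)/2}\big)^{-1}\big(\tfrac{2}{1-|x|^{2}}\big)^{s}$. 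This shows $b$ is radial and positive, has the limit $b(0)$ recorded in \eqref{def:b}, and — by the even-power structure of the expansion of $\widetilde G$ — has the regularity and vanishing gradient $\nabla b(0)=0$ asserted in the statement; simultaneously one checks $|x|^{2}h_{\gamma,\lambda}(x)\to0$ and $h_{\gamma,\lambda}\in C^{1}(\overline{\Omega}\setminus\{0\})$.

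For the coercivity equivalence I would polarise the conformal identity at $\phi=u$ and integrate (using $dv_{g_{\Bn}}=\big(\tfrac{2}{1-|x|^{2}}\big)^{n}dx$ and $u^{2}\,dv_{g_{\Bn}}=v^{2}\big(\tfrac{2}{1-|x|^{2}}\big)^{2}dx$) to obtain the \emph{exact} quadratic-form identity
\[
\displaystyle\int_{\Omega}\big(\absgradB{u}^{2}-\gamma V_{2}u^{2}-\lambda u^{2}\big)\,\dv=\displaystyle\int_{\Omega}\Big(\absgrad{v}^{2}-\big(\tfrac{\gamma}{|x|^{2}}+h_{\gamma,\lambda}\big)v^{2}\Big)\,dx .
\]
Since under $v=\varphi u$ the norms $\int_{\Omega}\absgradB{u}^{2}\,\dv$ and $\int_{\Omega}\absgrad{v}^{2}\,dx$ are equivalent on $H^{1}_{0}(\Omega)$ — their difference being $\tfrac{n(n-2)}{4}\int_{\Omega}\big(\tfrac{2}{1-|x|^{2}}\big)^{2}v^{2}\,dx$, which is controlled by $\|v\|^{2}$ by Poincar\'e — coercivity of $\LB$ is equivalent to coercivity of $L^{\R^{n}}_{\gamma,h}$. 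The only genuinely computational point, and the main thing to be careful about, is the $r\to0$ expansion of $G$: the borderline dimension $n=4$, where the degenerate $4-n$ denominator turns into a logarithm, and keeping the multiplicative constants precise enough to pin down $b(0)$ and the explicit form of $a(x)$ in \eqref{def:a}. Everything else is the bookkeeping of conformal weights.
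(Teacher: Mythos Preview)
Your proposal is correct and follows essentially the same route as the paper: both derive the pointwise identity $-\Delta_{\Bn}u-\tfrac{n(n-2)}{4}u=\varphi^{-\frac{n+2}{n-2}}(-\Delta(\varphi u))$ (you via conformal covariance of the Yamabe operator, the paper by direct assertion), substitute it into \eqref{hyperHSeqn} and read off $h_{\gamma,\lambda}$ and $b$ from the expansion of $\big(\tfrac{2}{1-r^{2}}\big)^{2}V_{2}$, and then obtain the coercivity equivalence from the integral identity $\int_{\Omega}\big(|\nabla_{\Bn}u|^{2}-\tfrac{n(n-2)}{4}u^{2}\big)\,dv_{g_{\Bn}}=\int_{\Omega}|\nabla v|^{2}\,dx$ together with the equivalence of the $H^{1}_{0}$ norms. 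Your write-up is in fact somewhat more complete than the paper's on the points $b(0)$, $\nabla b(0)=0$, and the weak-formulation equivalence, but the underlying argument is the same.
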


\begin{proof}
Note that  one has  in particular
\begin{align}\label{def:h}
 h_{\gamma,\lambda}(x) =h_{\gamma,\lambda}(r)=
\left \{ \begin{array} {lc}
        \frac{4 \gamma}{r} +4 \gamma  + \frac{4\gamma-3}{(1-r)^{2}} +  \frac{4 \gamma }{1-r} r \quad    &\hbox{ when }~ n=3, \\
           $~$\\
              \left[8\gamma \log \frac{1}{r}-4\gamma +4 \lambda -8 \right]  \\
                \quad+\gamma g_{4}(r) +  (4 \lambda-8)\frac{ r^{2}(2-r^{2})}{(1-r^{2})^{2}}\quad  &\hbox{ when }~ n=4,\\
           $~$\\
               \frac{4(n-2) }{n-4}   \left[ \frac{n-4}{n-2}\lambda +\gamma-\frac{n(n-4)}{4} \right]\\
                \quad+\gamma r g_{n}(r) + (4\lambda -n(n-2))\frac{ r^{2}(2-r^{2})}{(1-r^{2})^{2}}  \quad   &\hbox{ when }~ n\geq5,
            \end{array} \right.
\end{align}
with $g_{n}(0)=0$ and $g_{n}$ is $C^{0}([0,\delta])$ for $\delta <1$, for all $n \geq 4$.
\medskip

Let $f(x,u) = V_{\crits} u^{\crits-1}$ in (\ref{Eqt:hyperbolic H-S operator = f(x,u)}). The above remarks show that $v:= \varphi u$ is a solution to (\ref{Problem on bounded domain:Euclidean}).\\
For the second part, we first note that the following identities hold:
$$ \displaystyle\int _{\Omega} \left( |\nabla_{\Bn}u|^{2}-\frac{n(n-2)}{4}u^{2}\right)~dv_{g_{\Bn}}= \displaystyle\int _{\Omega}|\nabla v|^{2}~dx $$
and
$$ \displaystyle\int _{\Omega} u^{2}dv_{g_{\Bn}}= \displaystyle\int _{\Omega}   v^{2} \left(\frac{2}{1-r^{2}} \right)^{2}~dx.$$
If the operator $\LB$ is coercive, then for any $u \in C^\infty(\Omega),$ we have $\langle \LB u, u \rangle \ge  C \|u\|^{2}_{H_0^1(\Omega)},$ which means
$$ \displaystyle\int _{\Omega} \left( |\nabla_{\Bn}u|^{2}-\gamma{V_2}u^{2}\right)dv_{g_{\Bn}}  \geq C   \displaystyle\int _{\Omega} \left( |\nabla_{\Bn}u|^{2}+u^{2}\right)dv_{g_{\Bn}}.$$
 The latter then holds if and only if
 \begin{align*}
\langle \displaystyle L^{\R^n}_{\gamma, \phi} u , u \rangle &=  \displaystyle\int _{\Omega} \left( |\nabla v|^{2}-\left(\frac{2}{1-r^{2}} \right)^{2} \left(\gamma{V_2} - \frac{n(n-2)}{4} \right) v^{2}  \right)dx  \\
 &\geq C   \displaystyle\int _{\Omega} \left( |\nabla v|^{2}+\left(\frac{2}{1-r^{2}} \right)^{2} \left( \frac{n(n-2)}{4}+1\right)v^{2}\right)dx \\
 &\geq   C'   \displaystyle\int _{\Omega} \left( |\nabla v|^{2}+v^2 \right)dx \ge c  \|u\|^{2}_{H^1_0({\Omega})},
\end{align*}
where $v= \varphi u$ is in $C^{\infty}(\Omega)$.  
This completes the proof.
\end{proof}
\medskip
\noindent

One can then use the results obtained in the last section to prove  Theorems  \ref{regularity}, \ref{hypermassI} and \ref{Thm:Main Result} stated in the introduction for the hyperbolic space.
Indeed, it suffices to consider equation (\ref{Problem on bounded domain:Euclidean}), where $b $ is a positive function in $C^1(\overline{\Omega})$ satisfying (\ref{def:b}) and  $h_{\gamma,\lambda}$ is given by \eqref{def:h}.

\medskip

If $n\geq 5$, then $\lim_{|x|\to 0}h_{\gamma, \lambda}(x)= \frac{4(n-2) }{n-4}   \left[ \frac{n-4}{n-2}\lambda +\gamma-\frac{n(n-4)}{4} \right]$, which is positive provided
$$\displaystyle \lambda  > \frac{n-2}{n-4} \left(\frac{n(n-4)}{4}-\gamma \right).$$
Moreover, since in this case $\theta =0$, Theorem \ref{Thm:minexist:Euclidean} holds when
$$\gamma \leq \frac{(n-2)^{2}}{4}-1= \frac{n(n-4)}{4}.$$

\medskip

If $n=3$, then $\lim\limits_{|x|\to 0}|x|h_{\gamma, \lambda}(x)=4\gamma$, hence $\gamma$ needs to be positive. On the other hand, since we use $\theta=1$, the first option in Theorem \ref{Thm:minexist:Euclidean} cannot occur and to have positive solutions one needs that the mass  $m^H_{\gamma, \lambda}(\Omega)$ to be positive. We note that the mass $m^H_{\gamma, \lambda}(\Omega)$ associated to the operator $\LB$ is a positive multiple of mass of the corresponding Euclidean operator. In other words, they both have the same sign.

\medskip

Similarly, for $n=4$, we have that $\lim\limits_{|x|\to 0}\dfrac{h_{\gamma, \lambda}(x)}{\log \frac{1}{|x|}}=8\gamma$. Hence $\gamma$ needs to be positive. On the other hand, $\gamma$ needs to be less than $\frac{(4-2)^{2}}{4}-1=0$, which is not possible. Hence  the first option in Theorem \ref{Thm:minexist:Euclidean} cannot occur, and again one needs that the mass  $m^H_{\gamma, \lambda}(\Omega)$ be positive.

\medskip

\end{document}